\def\iddots{\mathinner{\mkern1mu\raise\p@
\vbox{\kern7\p@\hbox{.}}\mkern2mu
\raise4\p@\hbox{.}\mkern2mu\raise7\p@\hbox{.}\mkern1mu}}
\newcommand\setItemnumber[1]{\setcounter{enum\romannumeral\@enumdepth}{\numexpr#1-1\relax}}
\newcommand{\p}{\mathbb{P}}
\DeclareMathOperator{\Bir}{Bir}
\DeclareMathOperator{\Aut}{Aut}
\DeclareMathOperator{\PGL}{PGL}
\DeclareMathOperator{\ZZ}{\mathbb{Z}}
\DeclareMathOperator{\RR}{\mathbb{R}}
\DeclareMathOperator{\CC}{\mathbb{C}}
\renewcommand{\AA}{\mathbb{A}}
\DeclareMathOperator{\Pic}{Pic}
\DeclareMathOperator{\NS}{NS}
\DeclareMathOperator{\pr}{pr}
\DeclareMathOperator{\Exc}{Exc}
\DeclareMathOperator{\Ind}{Ind}
\DeclareMathOperator{\Alb}{Alb}
\DeclareMathOperator{\OP}{\mathcal{O}}
\renewcommand{\k}{\mathbb{K}}
\renewcommand{\L}{\mathcal{L}}
\newcommand{\F}{\mathcal{F}}
\newcommand{\M}{\mathcal{M}}
\newcommand{\xdashrightarrow}[2][]{\ext@arrow 0359\rightarrowfill@@{#1}{#2}}
\newtheorem{theorem}[equation]{Theorem}
\newtheorem{lemma}[equation]{Lemma}
\newtheorem{corollary}[equation]{Corollary}
\theoremstyle{definition}
\title{Automorphisms of bounded growth}
\author{Alexandra Kuznetsova}
\address{Higher School of Modern Mathematics in Moscow Institute of Physics and Technology, Dolgoprudny, Russia; Laboratory of Algebraic Geometry in Higher School of Economics, Moscow, Russia;  Steklov Mathematical Institute of Russian Academy of Sciences, Moscow, Russia.}
 \email{sasha.kuznetsova.57@gmail.com}
\begin{document}

 \begin{abstract}
   We study birational automorphisms of algebraic varieties of bounded growth, i.e. such that the  norms of the inverse images $(f^n)^*\colon \mathrm{NS}(X)\to\mathrm{\NS(X)}$ of the powers of the automorphism $f\in\Bir(X)$ are bounded above for $n\geqslant 0$. We prove that some power of an infinite order automorphism of a variety $X$ with such property factors either through an infinite order translation on the Albanese variety of~$X$ or through an infinite order regular automorphism of~$\p^m$ for~\mbox{$m\geqslant 1$}. We deduce from this that if a rationally connected threefold admits an infinite order automorphism whose growth is bounded then the threefold is rational and an iterate of the automorphism is birationally conjugate to a regular automorphism of $\p^3$, a generalization of Blanc and Deserti's result \cite{Blanc-Deserti}.
 \end{abstract}
\maketitle

\section{Introduction}
Let $X$ be a projective variety over an algebraically closed field $\k$ of characteristic 0 and let $f$ be a birational automorphism of $X$. Fix an ample class $H$ in the Neron--Severi group $\NS(X)$. Then we can consider the sequence of numbers $((f^n)^*H) \cdot H^{\dim(X) - 1}$. If the automorphism $f$ is birationally conjugate to a regular automorphism of a projective variety then the asymptotic properties of this sequence do not depend on the choice of the ample class $H$ and the birational model of $X$ and its growth can be 
 \begin{enumerate}
  \item[(1)] exponential, i.e. the limit $\lim_{n\to \infty}\left(((f^n)^*H) \cdot H^{\dim(X) - 1}\right)^{\frac{1}{n}}$ exists and is strictly greater than $1$;  
  \item[(2)] polynomial, i.e. $((f^n)^*H) \cdot H^{\dim(X) - 1} = O(n^k)$ for an integer $k\geqslant 1$;
  \item[(3)] bounded, i.e. $((f^n)^*H) \cdot H^{\dim(X) - 1} \leqslant C$ for a real number $C\geqslant 0$.
 \end{enumerate}
 The limit in the point \textup{(1)} exists and is birational invariant for not necessarily regularizable automorphisms of a variety $X$ of any dimension, see~\cite{DS_Lambdas} and \cite{TTT_dynamical_degrees}. If the growth of $f$ is polynomial then there exist bounds on the integer $k$, see \cite{DLOZ} and \cite{HJ}. In the case of $\dim(X) = 2$ this classification holds also in the case of not necessarily regularizable birational automorphisms by \cite{Diller_Favre}. 
 
 Here we are going to study birational automorphisms of the third type i.e. those whose growth is bounded. It is easy to observe that the growth of any  finite order automorphism is bounded. We focus on birational automorphisms of $X$ whose order is infinite and the growth is bounded. Any birational automorphism with such property is necessarily  regularizable on a projective birational model of~$X$ by~\cite{Weil}, see also \cite{Kraft} and \cite{Yang}; thus, it suffices to study regular automorphisms with these properties. Our goal is to show that they are sufficiently rare. 
 
 First, let us consider the situation in low dimensions. If $X$ is a curve and $f$ is its automorphism, then its growth is necessarily bounded and the infinite order automorphisms of curves exist either if $X\cong\p^1$ and~$f$ is an infinite order element of $\Aut(\p^1)\cong\PGL(2,\k)$ or if $X$ is an elliptic curve and an iterate of $f$ is its translation. 
 
 In the case when $X$ is a surface it is easy to show that $f$ is birationally conjugate to a regular automorphism of a bielliptic surface or an abelian surface or a ruled surface.  Once $f$ is a regular automorphism of a bielliptic or an     abelian surface then an iterate of $f$ is induced by a translation of an abelian surface (in the case of a bielliptic surface it is a translation on the universal cover of the surface).  Once $X$ is birational to $\p^1\times C$ where the genus of the curve $C$ equals at least 1 then using the minimal model program one can show that the automorphism $f$ is induced by a product $f_{\p^1}\times f_{C}\in \Aut(\p^1)\times \Aut(C)\subset \Aut(\p^1\times C)$ and either the order of~\mbox{$f_{\p^1}\in \Aut(\p^1)$} is infinite or $C$ is an elliptic curve and the order of $f_C$ is infinite. The most difficult case when $S$ is rational is addressed in the work by Blanc and Deserti; more precisely, they proved the following assertion:
 \begin{theorem}[{\cite[Theorem A]{Blanc-Deserti}}]\label{theorem: Blanc-Deserti}
  Let $S$ be a rational surface over an algebraically closed field of characteristic $0$ and let $f$ be its infinite order automorphism whose growth is bounded. Then there exists a birational map $\alpha\colon S\dashrightarrow \p^2$ such that $\alpha\circ f\circ \alpha^{-1}$ is a regular automorphism of $\p^2$.
 \end{theorem}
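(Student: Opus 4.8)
The plan is to follow the elliptic/parabolic/hyperbolic trichotomy for surface automorphisms and to show that bounded growth forces $f$ into the elliptic regime, where after changing the model and passing to an iterate $f$ becomes a regular automorphism lying in the identity component $\Aut^0$ of the automorphism group; an equivariant minimal model program then brings us down to $\p^2$. By the regularizability statement quoted in the introduction I may assume from the start that $f\in\Aut(S')$ for a smooth projective rational surface $S'$. Bounded growth means the norms of $(f^n)^*$ on $\NS(S')$ stay bounded for $n\geq 0$; since $f^*$ is an automorphism of the lattice $\NS(S')$, this forces every eigenvalue of $f^*$ to have absolute value $1$ and forces $f^*$ to be semisimple, because a nontrivial Jordan block or an eigenvalue of larger modulus would produce unbounded growth. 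Eigenvalues of a lattice automorphism are algebraic integers, and having all Galois conjugates of absolute value $1$ they are roots of unity by Kronecker's theorem; together with semisimplicity this makes $f^*$ of finite order, so after replacing $f$ by a power I may assume $f^*=\id$ on $\NS(S')$.

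Now $f$ fixes an ample class of $S'$, hence lies in the subgroup of $\Aut(S')$ preserving a polarization, which is an algebraic group of finite type; after one more iterate I may therefore assume $f\in\Aut^0(S')$. Since $S'$ is rational its Albanese vanishes and $\Aut^0(S')$ is a connected linear algebraic group, so the Zariski closure $G=\overline{\langle f\rangle}$ is a connected commutative affine algebraic group, of positive dimension because $f$ has infinite order. I run a $G$-equivariant minimal model program on $S'$, contracting $G$-invariant configurations of $(-1)$-curves. Because $G$ is connected it acts trivially on every finite set --- in particular on the components of a reducible conic-bundle fibre and on the exceptional curves lying over a point --- so the program terminates either at a contraction to a point, forcing Picard rank one and hence $S'\cong\p^2$, or at a $G$-conic bundle over $\p^1$ with no reducible fibres, i.e. a Hirzebruch surface $\FF_n$ with $n\neq 1$.

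If the resulting model is $\p^2$ we are done, since then $f\in\Aut(\p^2)=\PGL(3,\k)$. Suppose instead we land on $\FF_n$. As $G$ is connected and solvable, Borel's fixed point theorem provides a $G$-fixed point, and for $n\geq 1$ the unique negative section $C$ with $C^2=-n$ is $G$-invariant because it is the only negative curve. If some $G$-fixed point lies off $C$, then blowing it up and contracting the strict transform of the fibre through it is an $f$-equivariant elementary transformation $\FF_n\dashrightarrow\FF_{n-1}$; iterating lowers $n$ until I reach either $\FF_1$, whose invariant $(-1)$-section contracts $f$-equivariantly to $\p^2$, or $\FF_0=\p^1\times\p^1$. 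In the latter case I write the induced automorphism as $(g,h)\in\PGL(2,\k)\times\PGL(2,\k)$, pick a fixed point $p=(a,b)$, observe that the two rulings through $p$ are invariant, and blow up $p$ and contract these two invariant $(-1)$-curves; this realizes the $f$-equivariant modification $\Bl_p(\p^1\times\p^1)\cong\Bl_{p_1,p_2}\p^2\to\p^2$ and makes $f$ regular on $\p^2$. Composing $S\dashrightarrow S'$ with all these modifications yields the required $\alpha\colon S\dashrightarrow\p^2$.

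The crux is the descent from a Hirzebruch surface, and specifically guaranteeing at each stage a $G$-fixed point lying off the negative section so that the elementary transformation actually decreases $n$. When $G$ contains a torus such points are plentiful among the finitely many torus-fixed points, but when the infinite-order part of $G$ is unipotent the fixed locus is far more degenerate and may sit entirely on the invariant section, so one must argue more delicately --- for instance by first conjugating the $\GG_a$-action into a standard form on $\FF_n$, or by exploiting that an infinite-order unipotent element cannot preserve a section of very negative self-intersection without contradicting the bounded, semisimple action on $\NS(S')$ established in the first step. Controlling this unipotent case, and checking that the single element $f$ descends to a genuine automorphism of $\p^2$ even though the ambient group $G$ (whose $\Aut^0$ need not embed in $\PGL(3,\k)$, as for $\p^1\times\p^1$) does not, is where the real work lies.
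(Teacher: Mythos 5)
This theorem is not proved in the paper at all: it is quoted verbatim from Blanc--Deserti, and your strategy --- regularize, kill the action on $\NS$, pass to the identity component, and run an equivariant minimal model program down to $\p^2$ via Hirzebruch surfaces --- is essentially the strategy of their original proof, which the introduction of this paper explicitly describes. Measured against that, your write-up has two genuine gaps. First, you twice replace $f$ by an iterate (once to get $(f^M)^*=\mathrm{id}$ on $\NS(S')$, once to land in $\Aut^0(S')$), and from that point on every object in your argument --- the closure $G=\overline{\langle f^M\rangle}$, the $G$-invariant $(-1)$-curves you contract, the elementary transformations --- is equivariant only for the iterate. The theorem asserts that $f$ itself becomes a regular automorphism of $\p^2$, and this is strictly stronger: conjugating $f^M$ into $\Aut(\p^2)$ does not conjugate $f$, since $f$ normalizes $G$ but need not preserve the particular configurations of curves your MMP chooses to contract. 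The distinction is not pedantic; it is exactly the point the surrounding paper is making, since its own higher-dimensional results (Theorem \ref{theorem: automorphism of bounded growth implies ruled structure or translation on albanese} and Corollary \ref{corollary: generalization of Blanc-Deserti}) only control an iterate $f^N$, and Section \ref{section: example} constructs a threefold automorphism for which $f^N$ but not $f$ is conjugate to a regular automorphism. Blanc--Deserti avoid this by working equivariantly with the full Zariski closure of $\langle f\rangle$, which has finitely many components; the MMP then contracts orbits of disjoint $(-1)$-curves and terminates at $G$-minimal del Pezzo surfaces or conic bundles, and the ensuing case analysis is where much of their paper lives.

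Second, you concede the decisive step yourself: the descent $\FF_n\dashrightarrow\FF_{n-1}$ needs a fixed point off the negative section $C$, and when the infinite-order part of $G$ is unipotent the Borel fixed point may lie only on $C$ (a unipotent lift can act on the invariant fiber fixing just the point of $C$, and an elementary transformation centered on $C$ raises $n$ rather than lowering it). Your suggested remedies --- a normal form for the $\mathbb{G}_a$-action, or a contradiction with the semisimple action on $\NS(S')$ --- are sketches of ideas, not arguments, and you say explicitly that this is ``where the real work lies.'' As it stands the induction on $n$ does not close, so the proof is incomplete precisely at its crux. The preliminary spectral step is fine (bounded forward norms of $(f^n)^*$ on the lattice $\NS(S')$ force semisimplicity and unit-modulus algebraic integer eigenvalues, hence roots of unity by Kronecker), and the reduction to $\p^2$ or $\FF_n$ with $n\neq 1$ for a connected group is correct; but to repair the whole you must both keep $f$ itself throughout (via the non-connected group closure) and actually carry out the fixed-point analysis on the Hirzebruch surfaces.
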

 Thus, in the case of dimensions 1 and 2 the only sources of automorphisms whose orders are infinite and growths are bounded are translations of abelian varieties and regular automorphisms of projective spaces. We show that in higher dimensions the situation is similar. Recall that $\Alb(X)$ is the Albanese variety of~$X$. Here is the main result of this paper.
\begin{theorem}\label{theorem: automorphism of bounded growth implies ruled structure or translation on albanese}
 Let $X$ be a smooth projective complex variety and let $f \in \Aut(X)$ be an automorphism whose growth is bounded. Then we have one of the following cases:
 \begin{enumerate}
  \item[\textup{(1)}] the order of $f$ is finite;
  \item[\textup{(2)}] there exists $N\geqslant 1$ such that $f^N$ induces an infinite order translation on $\Alb(X)$;
  \item[\textup{(3)}] there exists a birational isomorphism $\alpha\colon X\dashrightarrow \p^m\times Z$ for a smooth projective variety $Z$ with $m\geqslant 1$ such that $\alpha\circ f\circ \alpha^{-1} = g\times h\in \Aut(\p^m)\times \Aut(Z)$ where 
 \begin{enumerate}
  \item[\textup{(a)}] the order of the automorphism $g\in\Aut(\p^m)$ is infinite;
  \item[\textup{(b)}] the order of the automorphism $h\in\Aut(Z)$ is finite.
 \end{enumerate}
 \end{enumerate}
\end{theorem}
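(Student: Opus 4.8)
The plan is to use the bounded-growth hypothesis only at the very start, in order to push $f$ into the connected automorphism group, and then to run a purely group-theoretic and birational argument. Since $f$ is a regular automorphism, $f^*$ acts on the lattice $\NS(X)$, and bounded growth means that all powers of this operator have bounded norm; hence $f^*|_{\NS(X)}$ is semisimple with eigenvalues on the unit circle, and being integral its eigenvalues are roots of unity by Kronecker's theorem. Thus $f^*$ has finite order on $\NS(X)$, so after replacing $f$ by a power I may assume $f^*=\mathrm{id}$ on $\NS(X)$; in particular $f$ fixes the class of a polarization $H$. By Lieberman's theorem the group $\Aut_{[H]}(X)$ of automorphisms fixing $[H]$ has $\Aut^0(X)$ of finite index, so a further power lands in $\Aut^0(X)$. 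I may therefore assume $f\in\Aut^0(X)$, still of infinite order. This is the only step where the growth hypothesis is essential: for merely polynomial growth the operator $f^*$ could be non-trivially unipotent and this reduction would break.

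Next I would exploit the Albanese. The morphism $a\colon X\to\Alb(X)$ is $\Aut^0(X)$-equivariant, inducing $\rho\colon\Aut^0(X)\to\Aut^0(\Alb X)=\Alb(X)$, so that $\rho(f)$ is a translation. If $\rho(f)$ has infinite order we are in case (2). Otherwise, after a further power, $f\in\ker\rho$, which by the Nishi--Matsumura theorem is a linear algebraic group $L$ acting faithfully on $X$, and $f$ remains of infinite order. It then remains to treat an infinite-order element of a linear algebraic group.

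For the product structure, let $\Gamma=\overline{\langle f\rangle}\subseteq L$, a commutative linear algebraic group, with identity component $\Gamma^0\cong\GG_a^{r}\times\GG_m^{s}$, where $r+s\geqslant 1$ and a power of $f$ lies in $\Gamma^0$. Because $\Gamma$ is commutative, $f$ commutes with $\Gamma^0$ and hence permutes its orbits. By Rosenlicht's theorem there is a $\Gamma^0$-invariant dense open $U\subseteq X$ with geometric quotient $\pi\colon U\to Z$ whose fibers are the $\Gamma^0$-orbits; shrinking $U$, all stabilizers coincide with a fixed subgroup $S$, so $\pi$ is generically a torsor under $\bar\Gamma=\Gamma^0/S\cong\GG_a^{r'}\times\GG_m^{s'}$ with $m:=r'+s'\geqslant 1$ (otherwise $\Gamma^0$ would act trivially, contradicting faithfulness). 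Since $\GG_a$ and $\GG_m$ are special groups, this torsor is generically trivial, giving an equivariant birational isomorphism $U\sim_{\mathrm{bir}}\bar\Gamma\times Z$; compactifying the dense orbit $\bar\Gamma\hookrightarrow\p^m$ equivariantly and replacing $Z$ by a smooth projective model yields $\alpha\colon X\dashrightarrow\p^m\times Z$ under which $\Gamma^0$ acts as $\bar\Gamma\subset\Aut(\p^m)$ on the first factor and trivially on $Z$.

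Finally I would descend to $f$ itself and check orders. As $f$ commutes with $\Gamma^0$, it preserves the fibration: it descends to $\bar f\in\Aut(Z)$ and acts on the generic fiber $\p^m$ by some $\bar g\in\Aut(\p^m)$, so $\alpha\circ f\circ\alpha^{-1}=\bar g\times\bar f$. A power $f^N\in\Gamma^0$ acts trivially on $Z$, so $\bar f$ has finite order, giving (3b); the same $f^N$ acts on $\p^m$ through its image in $\bar\Gamma$, which is of infinite order, since otherwise a power $f^{NQ}$ would lie in the generic stabilizer $S$ and fix a dense set of points, forcing $f^{NQ}=\mathrm{id}$, so $\bar g$ has infinite order, giving (3a). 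I expect the crux to lie precisely in this last passage together with the previous paragraph: establishing that the orbit fibration is birationally a trivial product (the special-group/no-name input), and, more delicately, that $f$ itself --- not merely a power --- respects the splitting so that its conjugate is a genuine product $\bar g\times\bar f$ rather than a non-split extension. The supporting technical points are that $Z$ may be taken smooth projective and that the translation action of $\bar\Gamma$ extends regularly to the chosen compactification $\p^m$.
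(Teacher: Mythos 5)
Your proposal takes a genuinely different route from the paper. The paper never touches $\Aut^0(X)$: it shows (via the finite-order action on $\Pic^0(X)$ and a delicate argument over the Albanese, Lemma 3.4 and Corollary \ref{corollary: either translation on albanese or invariant big nef bundle}) that $f$ fixes an honest big and nef line bundle, embeds $X$ by its linear system, and then invokes Blanc's classification of $\Bir(\p^n)$-conjugacy classes in $\Aut(\p^n)$ together with the representation-theoretic description of invariant linear systems to conclude that the image is a cone, whence the product structure. Your route (Kronecker plus Lieberman to land a power of $f$ in $\Aut^0(X)$, Nishi--Matsumura to make the kernel of the Albanese action linear, then Rosenlicht and the specialness of $\GG_a^{r'}\times\GG_m^{s'}$ to trivialize the generic orbit fibration) is attractive and, if completed, would bypass the paper's $\Pic^0$-twisting analysis entirely. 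But there is a genuine gap, exactly at the point you flag and then leave unproved: commuting with $\Gamma^0$ only forces $f$ to act on the trivialized family $\bar\Gamma\times Z$ as a \emph{twisted} product $(x,z)\mapsto\bigl(c(z)\cdot x,\ \bar f(z)\bigr)$, where $c\colon Z\dashrightarrow\bar\Gamma$ is a rational map (the torsor isomorphism between the fiber over $z$ and the fiber over $\bar f(z)$ is a translation depending on $z$). The assertion $\alpha\circ f\circ\alpha^{-1}=\bar g\times\bar f$ is therefore not a formal consequence of equivariance; it is the heart of the theorem, and it is precisely here that the paper instead deploys Blanc's normal forms $M_1, M_2$ and the cone Lemma \ref{lemma: invariant subvarieties under automorphisms of pn}. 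The example in Section \ref{section: example} shows that splitting statements of this kind for a single automorphism (rather than a power) are genuinely delicate.

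The gap is fillable, but by an argument you do not give: since some power $f^N$ lies in $\Gamma^0$ and acts by a \emph{constant} translation $\gamma_0$ in the chosen trivialization, the cocycle $c$ satisfies the norm condition $\prod_{i=0}^{N-1}c(\bar f^i z)=\gamma_0$ (multiplicatively on the $\GG_m$-part, additively on the $\GG_a$-part), and straightening $c$ to a constant amounts to trivializing a class in $H^1\bigl(\ZZ/N,\ \bar\Gamma(\k(Z))\bigr)$: the $\GG_a^{r'}$-component splits by characteristic-zero averaging, and the $\GG_m^{s'}$-component by Hilbert's Theorem 90 for the cyclic extension $\k(Z)/\k(Z)^{\bar f}$, with a separate (easy) argument when $\bar f$ acts with smaller order on $\k(Z)$ --- e.g.\ if $\bar f=\mathrm{id}$ then $c^N=\gamma_0$ forces $c$ constant. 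Two further claims are asserted without proof and need care: (i) that after shrinking $U$ all stabilizers equal a fixed subgroup $S$ --- true in your situation, but because $\Gamma=\overline{\langle f\rangle}$ is monogenic the stabilizers range over a countable rigid family (rigidity of diagonalizable subgroups, and the unipotent part of $\Gamma^0$ is at most one $\GG_a$), which is an argument, not a shrinking; and (ii) the bookkeeping between $f$ and its powers: you replace $f$ by $f^M$ early on, while case (3) of the theorem concerns $f$ itself --- this is repairable since the original $f$ centralizes $\Gamma^0=\overline{\langle f^M\rangle}^{\,0}$ and so descends through the same fibration (and $\bar f$, a priori only birational on $Z$, has finite order and is hence regularizable on a smooth projective model), but none of this is said.
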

 Note that in the case (2) in Theorem \ref{theorem: automorphism of bounded growth implies ruled structure or translation on albanese} the automorphism $f^N$ factors through an infinite order translation on the abelian variety whether in the case (3) the automorphism $f^N$ factors through an infinite order regular automorphism of the projective space $\p^m$, here $N$ equals the order of the automorphism $h$.

 Theorem \ref{theorem: automorphism of bounded growth implies ruled structure or translation on albanese} implies the following generalization of Theorem \ref{theorem: Blanc-Deserti}.
\begin{corollary}\label{corollary: generalization of Blanc-Deserti}
 If $X$ is a rationally connected threefold over an algebraically closed field of characteristic~$0$ and $f$ is its infinite order automorphism whose growth is bounded then $X$ is rational. Moreover, there exists~\mbox{$N\geqslant 1$}  and a birational map $\alpha\colon X\dashrightarrow \p^3$ such that $\alpha\circ f^N\circ \alpha^{-1}$ is a regular automorphism of $\p^3$.
\end{corollary}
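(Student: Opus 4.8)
The plan is to feed $f$ into Theorem~\ref{theorem: automorphism of bounded growth implies ruled structure or translation on albanese} and eliminate the first two cases. Since $f$ has infinite order, case~(1) is impossible. Because $X$ is rationally connected, it carries no nonzero global $1$-forms, so $\Alb(X)$ is trivial and admits no nontrivial translation; hence case~(2) cannot occur either. Thus I land in case~(3): there is a birational map $\alpha\colon X\dashrightarrow\p^m\times Z$ with $m\geqslant1$ and $\alpha\circ f\circ\alpha^{-1}=g\times h$, where $g\in\Aut(\p^m)$ has infinite order and $h\in\Aut(Z)$ has finite order $N$. Replacing $f$ by $f^N$, I may assume $h=\Id_Z$, so that $\alpha\circ f^N\circ\alpha^{-1}=g^N\times\Id_Z$ with $g^N$ again of infinite order.

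Next I would pin down $Z$. Since $X$ is rationally connected and rational connectedness is a birational invariant preserved under the dominant projection $\p^m\times Z\to Z$, the variety $Z$ is rationally connected of dimension $3-m\leqslant2$. A smooth projective rationally connected variety of dimension at most $2$ is rational (a point, $\p^1$, or a rational surface by Castelnuovo's criterion), so $Z$ is rational and therefore $X$ is birational to $\p^m\times\p^{3-m}$, hence to $\p^3$; this already yields the rationality claim. Choosing a birational map $Z\dashrightarrow\p^{3-m}$ and applying it to the second factor conjugates $g^N\times\Id_Z$ to $g^N\times\Id_{\p^{3-m}}$ on $\p^m\times\p^{3-m}$.

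It remains to prove the key lemma: for any $\gamma\in\Aut(\p^m)$ and any $k\geqslant0$, the automorphism $\gamma\times\Id_{\p^k}$ of $\p^m\times\p^k$ is birationally conjugate to a regular automorphism of $\p^{m+k}$. The idea I would use is that $\gamma$ always preserves a hyperplane $H\subset\p^m$ --- take $H$ cut out by an eigenvector of the transpose of a matrix representing $\gamma$ --- so that on the affine chart $\AA^m=\p^m\setminus H$ the map $\gamma$ acts as an affine transformation $\phi\in\k^m\rtimes\GL_m$. Then the product chart $\AA^m\times\AA^k\cong\AA^{m+k}\subset\p^{m+k}$ furnishes a birational map $\beta\colon\p^m\times\p^k\dashrightarrow\p^{m+k}$ under which $\gamma\times\Id_{\p^k}$ becomes the affine map $(x,y)\mapsto(\phi(x),y)$ of $\AA^{m+k}$; since an affine transformation extends to a linear, hence regular, automorphism of $\p^{m+k}$, the conjugate $\beta\circ(\gamma\times\Id)\circ\beta^{-1}$ is regular. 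Applying this with $\gamma=g^N$ and $k=3-m$ and composing all the birational maps produces the desired $\alpha'\colon X\dashrightarrow\p^3$ with $\alpha'\circ f^N\circ(\alpha')^{-1}$ regular.

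The step I expect to be the main obstacle is this last one. The tempting approach --- realizing $\p^m\times\p^k$ as a blow-up of $\p^{m+k}$ along a linear centre and lifting a linear automorphism --- leads to a fibrewise action twisted by a multiplier cocycle that is in general not a coboundary (for instance its values at the fixed points are eigenvalues of $\gamma$ rather than $1$), so that a naive linear extension produces an automorphism conjugate to a twisted, non-product action rather than to $\gamma\times\Id$. The clean resolution is the affine-chart argument above, whose only real input is the elementary but crucial observation that every element of $\Aut(\p^m)$ fixes a hyperplane and is therefore affine on the complementary chart; this handles all $\gamma$ uniformly, including the non-semisimple (e.g.\ unipotent) ones for which the bundle constructions break down.
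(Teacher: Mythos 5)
Your proof follows the same skeleton as the paper's for everything after the first step: land in the product case, pass to $f^N$ with $N$ the order of $h$ to kill the second factor, observe that $Z$ is rationally connected of dimension at most $2$ and hence rational, and conjugate $g^N\times\Id_{\p^{3-m}}$ into $\Aut(\p^3)$. Your closing lemma is correct and is actually a useful elaboration: the paper ends with the bare assertion ``This automorphism is conjugate to an automorphism of $\p^3$,'' while you supply the justification --- every $\gamma\in\Aut(\p^m)$ preserves a hyperplane (cut out by an eigenvector of the transpose of a representing matrix), hence acts affinely on the complementary chart, so $\gamma\times\Id_{\p^k}$ is an affine automorphism of $\AA^m\times\AA^k\cong\AA^{m+k}$ and extends to a linear automorphism of $\p^{m+k}$; this handles unipotent $\gamma$ uniformly, exactly as you say.

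The one genuine gap is the base field, and it is precisely the point the paper flags in its introduction. You invoke Theorem \ref{theorem: automorphism of bounded growth implies ruled structure or translation on albanese}, which is stated and proved only for smooth projective \emph{complex} varieties: the elimination of the Albanese alternative goes through Lemma \ref{lemma: bounded growth implies finite order on Pic0 and NS} and Corollary \ref{corollary: either translation on albanese or invariant big nef bundle}, whose treatment of $\Pic^0$ uses the analytic description $\Pic^0(X)\cong H^1(X,\OP_X)/H^1(X,\ZZ)$ and the Hodge decomposition. The corollary, however, claims an arbitrary algebraically closed field of characteristic $0$, so quoting the theorem there is a hypothesis mismatch. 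The paper's proof bypasses the theorem entirely and cites Lemma \ref{lemma: proof of thm in the case of invariant big nef line bundle} directly: for rationally connected $X$ one has $\Pic^0(X)=0$, so $\Pic(X)=\NS(X)$ is discrete, bounded growth forces some power of $f^*$ to act trivially on $\NS(X)$ by the field-independent eigenvalue argument, and averaging an ample bundle over the orbit produces an $f$-invariant big and nef line bundle --- which is all that Lemma \ref{lemma: proof of thm in the case of invariant big nef line bundle} requires, and that lemma is proved over any algebraically closed field of characteristic $0$. Your instinct that rational connectedness trivializes the Albanese obstruction is the right one, but to make the argument field-general it must be upgraded to the observation that $\Pic^0(X)=0$ removes every complex-analytic input (or, alternatively, one reduces to $\CC$ by a Lefschetz-principle spreading-out argument, which you do not mention). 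As written, your first step is valid only over $\CC$; the repair is short, but it is exactly the content the paper reorganized its proof to supply.
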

In higher dimensions the same assertion does not hold. Indeed, fix a stably non-rational Fano threefold~$X$ and an automorphism $g\in \Aut(\p^k)$ such that $k\geqslant 1$ and the order of $g$ is infinite. Then $f=\mathrm{Id}_{X}\times g$ is an automorphism of $X\times \p^k$ whose order is infinite and the growth is bounded while $X\times \p^k$ is a rationally connected non-rational variety of dimension $3+k$.

Also unlike the two-dimensional case we can not expect that if $f$ is an infinite order automorphism of a rationally connected threefold $X$ whose order is bounded then not only $f^N$ but also $f$ is conjugate to a regular automorphism of $\p^3$. See Section \ref{section: example} for a counterexample.

Our approach to Theorem \ref{theorem: automorphism of bounded growth implies ruled structure or translation on albanese} differs from the one used in the Blanc and Deserti's paper. They applied the equivariant minimal model program to reduce the question to the study of infinite order automorphisms of del Pezzo surfaces and conic bundles. In higher dimensions this method is very inefficient since the minimal model program does not exist in the case of dimension 4 and higher. Also even in the case of dimension 3 it would reduce the question to the study of regular automorphisms of terminal Fano threefolds and three dimensional Mori fiber spaces. There are methods to work with these varieties; however, this approach would require a lot of work even in the case of dimension 3. 

Instead of this, we observe that unless the order of the induced automorphism on $\Alb(X)$ is infinite, an automorphism~\mbox{$f\in\Aut(X)$} whose growth is bounded preserves a big and nef line bundle $\L$ on $X$. The linear system of $\L$ defines a rational map from $X$ to $\p(H^0(X,\L))\cong \p^n$ and $X$ is birational to its image $X'$ in~$\p^n$. The automorphism $f$ induces $F\in\Aut(\p^n)$ such that $X'$ is an $F$-invariant subvariety. Using Blanc's result~\cite{Blanc_conjugacy_of_PGL} we deduce that if the order of $f$ is infinite then after a birational conjugation $F$ is induced by a very special matrix. In particular, all $F$-invariant subvarieties  which are not contained in hyperplanes of $\p^n$ are cones with same vertex and $F$ induces an automorphism of finite order on the bases of the cones. Thus, we conclude that $X$ itself is such a cone i.e it is birationally isomorphic to a product $\p^m\times Z$ as we claimed. 

Note that Theorem \ref{theorem: automorphism of bounded growth implies ruled structure or translation on albanese} is proved only for complex algebraic varieties whereas Corollary \ref{corollary: generalization of Blanc-Deserti} holds for varieties over any algebraically closed field of characteristic 0. The reason is that in order to study automorphisms whose inverse image $f^*\colon \Pic(X)\to \Pic(X)$ preserve no big and nef line bundle we use the fact that in the complex setting $\Pic^0(X)$ is isomorphic to the quotient $H^1(X,\OP_X)/H^1(X,\ZZ)$. We need this to deal with the case (2) of Theorem \ref{theorem: automorphism of bounded growth implies ruled structure or translation on albanese}. In Corollary \ref{corollary: generalization of Blanc-Deserti} we consider rationally connected varieties, their Picard groups are discrete; thus, we can skip this step and the proof works in greater generality.

\section{Automorphisms of projective space}\label{section: Blanc theorem and corollaries}
\subsection{Conjugacy classes of $\Aut(\p^n)$ under the action of $\Bir(\p^n)$}
The projective space $\p^n$ is a projectivization of a $(n+1)$-dimensional vector space i.e. one has $\p^n = \p(\k^{n+1})$. Any automorphism $f$ of $\p^n$ is an equivalence class of linear operators $F\colon \k^{n+1}\to \k^{n+1}$ up to rescaling. Choose a basis $(e_0,e_1,\dots,e_{n})$ in the vector space $\k^{n+1}$ and denote by $(x_0:x_1:\dots:x_n)$ the homogeneous coordinates on $\p^n$ related to this basis. We say {\bf the automorphism $f$ of $\p^n$ is induced by the matrix~$M$ in coordinates $(x_0:x_1:\dots:x_n)$} if~$M$ is a matrix in the equivalence class of $F$ in the basis $(e_0,e_1,\dots,e_{n})$.

There exists a Jordan basis of the operator $M$ in $\k^{n+1}$. For simplicity we will always replace $M$ by the matrix in the equivalence class of $F$ such that the first eigenvalue of $M$ equals $1$. Blanc, in \cite{Blanc_conjugacy_of_PGL}, described the conjugacy classes of automorphisms of $\p^n$ under the action of the group $\Bir(\p^n)$. In order to formulate his result we need the following definition: the numbers $\lambda_1,\dots, \lambda_k\in\k$ are {\bf multiplicatively independent} if the only set of integers $m_1,\dots, m_k$ such that $\lambda_1^{m_1}\dots\lambda_k^{m_k} = 1$ is the set $m_1 = \dots=m_k = 0$. In the case when~\mbox{$\k = \CC$} numbers $\lambda_1,\dots, \lambda_k$ are multiplicatively independent if their logarithms are linearly independent over $\mathbb{Q}$.
\begin{theorem}[\cite{Blanc_conjugacy_of_PGL}]\label{theorem: conjugacy classes of PGL in Bir}
 Let $f$ be an infinite order element in $\Aut(\p^n)$. Then there exists a birational automorphism $\alpha\in \Bir(\p^n)$ such that $\alpha\circ f\circ\alpha^{-1}$ is also an element of $\Aut(\p^n)$ induced by a matrix of one of the following types:
  \begin{align}\label{eq: conjugacy classes of PGL in Bir}
   M_1 = \begin{pmatrix}
          1 & 0 & 0& \dots & 0\\ 0 &\lambda_1&0& \dots & 0 \\ 0&0&\lambda_2&\dots&0\\ \vdots &\vdots &\vdots& \ddots & \vdots\\ 0 & 0 &0& \dots &\lambda_n
         \end{pmatrix};
  &&
  M_2 = \begin{pmatrix}
          1 & 1 & 0& \dots & 0\\ 0 &1&0& \dots & 0 \\ 0&0&\mu_2&\dots&0\\ \vdots &\vdots &\vdots& \ddots & \vdots\\ 0 & 0 &0& \dots &\mu_{n}
         \end{pmatrix}.
  \end{align}
  Moreover, in the case of the matrix $M_1$ there exists $0\leqslant k\leqslant n-1$ such that numbers $\lambda_1,\dots, \lambda_k$ are roots of unity and $\lambda_{k+1},\dots, \lambda_n$ are multiplicatively independent in $\k$. Also for the matrix $M_2$ there exists $1\leqslant k\leqslant n$ such that numbers $\mu_2,\dots, \mu_k$ are roots of unity and $\mu_{k+1},\dots,\mu_n$ are multiplicatively independent in $\k$.
\end{theorem}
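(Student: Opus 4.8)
The plan is to normalize $f$ step by step under conjugation by the two subgroups of $\Bir(\p^n)$ that are easiest to control — the monomial transformations and the triangular (polynomial) birational transformations — exploiting the multiplicative Jordan decomposition $f=f_sf_u$ into its commuting semisimple and unipotent parts. First I would pass to a Jordan basis and rescale so that the eigenvalue in the top-left corner equals $1$. The argument then splits according to whether $f$ is semisimple ($f_u=\Id$), which will produce the form $M_1$, or has a nontrivial unipotent part ($f_u\neq\Id$), which will produce $M_2$. The only information about $f$ that survives birational conjugacy is the multiplicative data of the eigenvalues together with the Jordan block sizes, so the task is to bring this data into the stated canonical shape.

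For the semisimple case the key observation is the following dictionary. Writing $f_s=\diag(1,\lambda_1,\dots,\lambda_n)$ and passing to the affine torus $(\k^*)^n$ with coordinates $t_i=x_i/x_0$, the map acts by $t_i\mapsto\lambda_i t_i$; conjugating by the monomial transformation attached to a matrix $A\in\GL(n,\ZZ)$ replaces the eigenvalue vector by its image under the corresponding $\GL(n,\ZZ)$-action, i.e. the new eigenvalues are the monomials $\prod_j\lambda_j^{a_{ij}}$. Thus I would consider the finitely generated subgroup $\Gamma=\langle\lambda_1,\dots,\lambda_n\rangle\subset\k^*$ together with the lattice $R\subset\ZZ^n$ of multiplicative relations among the $\lambda_i$, and apply the structure theorem for finitely generated abelian groups (Smith normal form) to choose $A\in\GL(n,\ZZ)$ carrying $R$ to a standard coordinate sublattice. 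The resulting monomial conjugate is diagonal with its first $k$ eigenvalues equal to generators of the torsion subgroup of $\Gamma$ (roots of unity) and the remaining ones a basis of the free part (hence multiplicatively independent). Since $f$ has infinite order and $f_u=\Id$, the group $\Gamma$ is infinite and at least one multiplicatively independent eigenvalue must appear, so $k\leqslant n-1$; this is exactly $M_1$.

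For the unipotent case I would show that any nontrivial unipotent part can be straightened, by a triangular polynomial automorphism of an affine chart, into a single translation, i.e. a single $2\times2$ Jordan block with all other blocks of size $1$. The mechanism is the existence of enough polynomial first integrals of the additive one-parameter subgroup $s\mapsto\exp(sN)\cong\GG_a$, where $N=\log f_u$: for instance the two-block unipotent map $(a,b,c)\mapsto(a+b,\,b,\,c+1)$ admits the invariant $a-bc$, after which it becomes $(a-bc,\,b,\,c)\mapsto(a-bc,\,b,\,c+1)$, a single translation in the last coordinate, and an analogous iterated integral collapses a block of size $\geqslant 3$. Carrying the commuting semisimple part $f_s$ along, normalizing its eigenvalues as above on the invariant complement, and rescaling the whole matrix so that the surviving $2\times2$ block has eigenvalue $1$, yields the form $M_2$; here all remaining eigenvalues $\mu_i$ may be roots of unity, because the unipotent block already forces infinite order.

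The main obstacle is the unipotent reduction together with its interaction with $f_s$. Producing the first integrals in general — for several blocks of arbitrary sizes — and checking that the triangular conjugations straightening $f_u$ can be chosen compatibly with the diagonal form of $f_s$, so that the composite conjugate is genuinely linear and of the claimed shape with the eigencharacters of $f_s$ on the invariants tracked correctly, is the delicate computational heart. The semisimple case, by contrast, reduces to the clean $\GL(n,\ZZ)$-linear algebra of the relation lattice once the monomial-conjugation dictionary is in place.
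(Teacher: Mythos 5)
Your proposal is correct in outline and follows essentially the same route as the proof this paper relies on: Theorem \ref{theorem: conjugacy classes of PGL in Bir} is quoted from Blanc without reproof, and Blanc's argument --- whose structure is visible in the paper's proof of Lemma \ref{lemma: exc and ind of blanc conjugation} --- likewise collapses the unipotent part to a single $2\times 2$ Jordan block with eigenvalue $1$ by triangular polynomial conjugations with $P_i\in\k[x_0,\dots,x_i]$ (his Proposition 3 and Lemma 1, which is exactly your first-integral trick such as $a-bc$), and then normalizes the eigenvalues by monomial maps realizing the $\GL(n,\ZZ)$-action on the multiplicative relation lattice (his Proposition 6, your Smith normal form step). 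Your bookkeeping also matches the statement: infinite order in the semisimple case forces at least one multiplicatively independent eigenvalue, giving $k\leqslant n-1$ for $M_1$, while in the $M_2$ case the Jordan block already has infinite order so all $\mu_i$ may be roots of unity.
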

The proof of this theorem in \cite{Blanc_conjugacy_of_PGL} implies also the following assertion.
\begin{lemma}\label{lemma: exc and ind of blanc conjugation}
 Let $f$ be an element in $\Aut(\p^n)$ and let $\alpha\colon \p^n \dashrightarrow \p^n$ be the birational map described in Theorem \textup{\ref{theorem: conjugacy classes of PGL in Bir}}. Then $\Ind(\alpha)$ and $\Exc(\alpha)$ lie in the union of hyperplanes in $\p^n$.
\end{lemma}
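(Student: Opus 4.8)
The plan is to unwind the explicit construction of the conjugating map $\alpha$ in Blanc's proof and to observe that it is assembled entirely from two kinds of building blocks: linear automorphisms of $\p^n$ (elements of $\PGL(n+1,\k)$) and \emph{monomial} birational maps, that is, maps whose homogeneous coordinates are given by Laurent monomials in $x_0,\dots,x_n$. The reduction of a diagonal matrix $\diag(1,\lambda_1,\dots,\lambda_n)$ to the normal form $M_1$ proceeds by acting on the vector of eigenvalues, viewed inside the multiplicative group $(\k^*)^n$, by elements of $\GL(n,\ZZ)$; each such element is realized by conjugation with a monomial birational map, and a suitable choice of basis (via the lattice of multiplicative relations among the $\lambda_i$) separates the roots of unity from the multiplicatively independent eigenvalues. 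In the presence of a nontrivial Jordan block one first conjugates linearly to put the unipotent part in the upper-left corner and then applies the same monomial reductions in the remaining diagonal directions, which yields $M_2$. Thus one may write $\alpha = \ell' \circ \beta \circ \ell$, where $\ell,\ell'\in\Aut(\p^n)$ are linear and $\beta$ is monomial.

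With this structure in hand the two loci are easy to locate. A linear automorphism is an isomorphism of $\p^n$, so it contributes no points to the indeterminacy or exceptional locus, and it maps the union of the coordinate hyperplanes to another union of hyperplanes; hence pre- and post-composing with $\ell$ and $\ell'$ preserves the desired conclusion. It therefore suffices to treat the monomial map $\beta$. The monomial exponents of $\beta$ form an integer matrix that is invertible over $\ZZ$ — this is precisely the condition for $\beta$ to be birational — so $\beta$ admits a monomial inverse and restricts to an isomorphism of the open dense torus $T=\{x_0x_1\cdots x_n\neq 0\}$ onto itself. Consequently $\beta$ is defined and a local isomorphism at every point of $T$, which forces $\Ind(\beta)\subseteq \p^n\setminus T$ and $\Exc(\beta)\subseteq \p^n\setminus T$.

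Since $\p^n\setminus T=\bigcup_{i=0}^n\{x_i=0\}$ is a union of hyperplanes, the lemma follows for $\beta$, and then for $\alpha$ after transporting by the linear maps $\ell$ and $\ell'$, which send coordinate hyperplanes to hyperplanes.

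The main obstacle is the first step: one must read off from Blanc's argument that the conjugation is genuinely monomial up to linear maps, and in particular verify the $M_2$ case, where a Jordan block is present and the toric picture has to be combined with the unipotent direction. Once this structural description is secured, the localization of $\Ind$ and $\Exc$ on the toric boundary is routine.
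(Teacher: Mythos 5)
The diagonal case of your argument is fine and coincides with the paper's (Blanc's Proposition~6 gives a monomial conjugation, whose indeterminacy and exceptional loci lie on the toric boundary). But your structural premise --- that $\alpha=\ell'\circ\beta\circ\ell$ with $\ell,\ell'$ linear and $\beta$ monomial --- is precisely what fails in the Jordan-block case, and you flag it yourself as ``the main obstacle'' without securing it. Conjugation by a monomial map normalizes the diagonal torus: it transforms the eigenvalue vector of a \emph{diagonal} automorphism by the exponent matrix in $\GL(n,\ZZ)$, but it cannot change unipotent structure, and conjugating a non-semisimple linear map by a monomial map is in general not linear at all. To reduce a matrix with a Jordan block of size $\geqslant 3$, or with several nontrivial blocks, to the normal form $M_2$ (which has a single block of size $2$), Blanc's Lemma~1/Proposition~3 uses genuinely non-monomial \emph{triangular} maps $\beta(x_0:\dots:x_n)=(P_0:\dots:P_n)$ with $P_i\in\k[x_0,\dots,x_i]$ and $P_0=x_0^d$; in affine coordinates a typical instance is $(u,v)\mapsto\bigl(u,\,v-u(u-1)/2\bigr)$, conjugating $(u,v)\mapsto(u+1,v+u)$ to $(u,v)\mapsto(u+1,v)$. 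No pre- or post-composition with elements of $\PGL(n+1,\k)$ turns such a map into a monomial one, so the toric localization of $\Ind$ and $\Exc$ simply does not reach this step, and your proof has a gap exactly where you predicted.

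The paper closes that case by a different invariant: triangularity together with $P_0=x_0^d$ implies that $\beta$ restricts to a regular polynomial automorphism of the affine chart $U_0=\{x_0\neq 0\}\cong\AA^n$, whence both $\Ind(\beta)$ and $\Exc(\beta)$ lie in the single hyperplane $\{x_0=0\}$ (for $\Exc$ one uses that $\beta^{-1}$ is then also undefined only on $\{x_0=0\}$). The final monomial step $\gamma$ normalizing the diagonal part can be chosen with $Q_0=x_0^d$ and $Q_1=x_0^{d-1}x_1$, so it too is an automorphism of $U_0$, and the composite $\alpha=\gamma\circ\beta$ has $\Ind(\alpha),\Exc(\alpha)\subset\{x_0=0\}$. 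So the repair is to replace your slogan ``monomial up to linear maps'' by ``restricts to an automorphism of a fixed affine chart'': that is the property which both the monomial and the triangular building blocks share, and it is what actually propagates through Blanc's construction.
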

\begin{proof}
 Since a regular automorphism of $\p^n$ preserves hyperplanes, we can assume that in homogeneous coordinates  $(x_0:\dots,x_n)$ the automorphism $f$ is induced by a Jordan matrix $M$. First, assume that $M$ is diagonal. By \cite[Proposition 6]{Blanc_conjugacy_of_PGL} there exists a birational map $\alpha\colon \p^n\dashrightarrow \p^n$ which conjugates the automorphism $f$  to an automorphism of $\p^n$ induced by the matrix $M_1$. Moreover, the map $\alpha$ is monomial i.e. $\alpha (x_0:\dots:x_n) = (Q_0:\dots:Q_n)$ where $Q_i\in \k[x_0,\dots,x_n]$ is a monomial for all $0\leqslant i\leqslant n$. Therefore, the sets $\Ind(\alpha)$ and $\Exc(\alpha)$ lie in the union of hyperplanes in $\p^n$.
 
 Now assume that $M$ contains a Jordan block of size at least $2$. Renumbering the basis, we can assume that $M$ is as follows:
 \[
  M = \begin{pmatrix}
       1 & 1 & 0&\dots &0\\ 0 & 1 & \eta&\dots & 0 \\
       0& 0& \lambda_3 &     \dots & 0\\
       \vdots & \vdots & \vdots & \ddots & \vdots\\ 0 & 0 & 0 & \dots & \lambda_n
      \end{pmatrix}
 \]
 Here either $\eta = 1$ if the first Jordan block is of size at least 3 or $\eta = 0$ otherwise. Numbers $1,1,\lambda_3,\dots,\lambda_n$ are eigenvalues of $M$. By \cite[Proposition 3]{Blanc_conjugacy_of_PGL} there exists a birational map $\beta\colon \p^n \dashrightarrow \p^n$ which conjugates~$f$ to a regular automorphism $\beta^{-1}\circ f\circ \beta$ of $\p^n$ induced by the following matrix:
 \[
  M' = \begin{pmatrix}
       1 & 1 & 0&\dots &0\\ 0 & 1 & 0&\dots & 0 \\
       0& 0& \lambda_3 &     \dots & 0\\
       \vdots & \vdots & \vdots & \ddots & \vdots\\ 0 & 0 & 0 & \dots & \lambda_n
      \end{pmatrix}
 \]
 Here the matrix $M'$ contains a unique Jordan block of size at least $2$. By the construction in the proof of~\mbox{ \cite[Lemma 1]{Blanc_conjugacy_of_PGL}} the birational map $\beta(x_0:x_1:\dots:x_n) = (P_0:P_1:\dots:P_n)$ is defined by the set of homogeneous polynomials $P_0,P_1,\dots,P_n$ such that $P_i\in \k[x_0,\dots, x_i]$ i.e. the $i$-th polynomial does not depend on coordinates $x_{i+1},\dots, x_n$. In particular, $P_0 = x_0^d$ for an integer $d\geqslant 1$.
 
 The indeterminacy locus of $\beta$ equals the zero locus of polynomials $P_0 = P_1 = \dots = P_n = 0$. Then $\Ind(\beta)$ lies in the hyperplane $\{x_0 = 0\}$. In order to show that the same is true for $\Exc(\beta)$ observe that $\beta$ induces a regular automorphism of the affine chart $U_0 = \{x_0\ne 0\}\cong \AA^n$ in $\p^n$. Therefore, $\Ind(\beta^{-1})$ also lies in $\{x_0 = 0\}$. Then the exceptional locus of $\beta$ lies in the set $\{P_0 = 0\} = \{x_0 = 0\}$. 
 
 Then by \cite[Proposition 6]{Blanc_conjugacy_of_PGL} there exists a birational map $\gamma\colon \p^n\dashrightarrow \p^n$ which conjugates the automorphism $\beta^{-1}\circ f\circ \beta$  to an automorphism of $\p^n$ induced by the matrix $M_2$. Denote by $Q_0,\dots,Q_n$ the polynomials in~\mbox{$\k[x_0,\dots, x_n]$} such that
 \[
  \gamma(x_0:\dots:x_n) = (Q_0:\dots:Q_n).
 \]
 By Blanc's construction $Q_0,\dots, Q_n$ are monomials. Moreover, we can assume that $Q_0 = x_0^d$ and $Q_1 = x_0^{d-1}x_1$ since the eigenvalue corresponding to the Jordan block does not change under the conjugation. Thus, $\gamma$ defines a regular automorphism of the affine chart $U_0\cong\AA^n$. Thus, $(\gamma\circ \beta)|_{U_0}$ is also an automorphism of $U_0$. Then $\alpha = \gamma\circ \beta$ and we conclude that $\Ind(\alpha)$ and $\Exc(\alpha)$ lie in the hyperplane $\{x_0 = 0\}$.
 \end{proof}

 \subsection{Eigenpolynomials under the action of $\PGL(n+1,\k)$} We  consider an automorphism $f$ of the projective space $\p^n$, fix homogeneous coordinates $(x_0:\dots:x_n)$ on $\p^n$ and choose a matrix $M$ inducing~$f$ with respect to these coordinates. The automorphism $f$ induces the action $f^*$ on the linear system~\mbox{$\p(H^0(\p^n,\OP(d)))$} for $d\geqslant 1$. Homogeneous coordinates induce the basis of the space $H^0(\p^n,\OP(d))$. Then the operator $f^*$ is induced by the matrix~$S^d M^t$ acting on~$H^0(\p^n,\OP(d))$ with respect to these coordinates. An invariant hypersubspace in the linear system corresponds to a subrepresentation of $H^0(\p^n,\OP(d))$ under the action of~$S^d M^t$. Note that this correspondence does not depend on the choice of the matrix $M$ in the equivalence class $F$. In this section we describe the subrepresentations in this space under the action of matrix $M_1$ and~$M_2$ defined in~\eqref{eq: conjugacy classes of PGL in Bir}. 
\begin{lemma}\label{lemma: M1-subrepresentations}
 Let $f$ be an automorphism of the projective space $\p^n$ induced by the matrix $M_1$ defined in~\textup{\eqref{eq: conjugacy classes of PGL in Bir}}. If $W\subset H^0(\p^n,\OP_{\p^n}(d))$ is a subrepresentation of $f^*$ then there exist polynomials $P_1,\dots, P_m\in\k[x_0,\dots, x_k]$ and monomials $Q_1,\dots, Q_m\in \k[x_{k+1},x_{k+2},\dots, x_n]$ such that:
 \[
  W = \langle P_1 Q_1 \dots, P_m Q_m \rangle.
 \]
\end{lemma}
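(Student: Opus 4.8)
The plan is to exploit that $M_1$ is diagonal, so that $f^*$ acts diagonally on the monomial basis of $H^0(\p^n,\OP(d))$ and the structure of invariant subspaces is controlled entirely by which monomials share an eigenvalue. Writing $\lambda_0=1$, the degree-$d$ monomial $x^a=x_0^{a_0}\cdots x_n^{a_n}$ is an eigenvector of $f^*$ with eigenvalue $\chi(a)=\prod_{i=0}^n\lambda_i^{a_i}$. Since a diagonalizable operator has the property that every invariant subspace is the direct sum of its intersections with the eigenspaces, I would first reduce to describing a single eigenspace $V_\mu=\langle x^a : \chi(a)=\mu\rangle$ together with its intersection $W\cap V_\mu$.

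The key step is to show that all monomials lying in one eigenspace share the same exponents in the multiplicatively independent variables $x_{k+1},\dots,x_n$. Concretely, if $\chi(a)=\chi(b)$ then $\prod_{i=0}^n\lambda_i^{a_i-b_i}=1$, and splitting off the root-of-unity factor $\prod_{i=0}^k\lambda_i^{a_i-b_i}$ coming from $x_0,\dots,x_k$ yields $\prod_{j=k+1}^n\lambda_j^{a_j-b_j}=\zeta$ for some root of unity $\zeta$; raising to the power equal to the order of $\zeta$ and invoking the multiplicative independence of $\lambda_{k+1},\dots,\lambda_n$ forces $a_j=b_j$ for all $j>k$. This is the heart of the argument and the only place the arithmetic hypothesis on the eigenvalues is used, so I expect it to be the main obstacle. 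As a consequence each eigenspace factors as $V_\mu=Q_\mu\cdot W_\mu$, where $Q_\mu=x_{k+1}^{a_{k+1}}\cdots x_n^{a_n}$ is the single monomial common to all monomials of $V_\mu$ and $W_\mu\subseteq\k[x_0,\dots,x_k]$ is the span of the corresponding ``head'' monomials.

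Finally I would assemble the pieces. Because multiplication by the nonzero monomial $Q_\mu$ is injective, one has $W\cap V_\mu=Q_\mu\cdot U_\mu$ for a subspace $U_\mu\subseteq\k[x_0,\dots,x_k]$; choosing an arbitrary basis $P_1^\mu,\dots,P_{m_\mu}^\mu$ of $U_\mu$ (these are genuine polynomials, which is exactly why the $P_i$ in the statement need not be monomials) exhibits $W\cap V_\mu=\langle P_1^\mu Q_\mu,\dots,P_{m_\mu}^\mu Q_\mu\rangle$. Taking the union of these generating sets over the finitely many $\mu$ with $W\cap V_\mu\neq 0$ and relabeling then yields the desired presentation $W=\langle P_1Q_1,\dots,P_mQ_m\rangle$ with $P_i\in\k[x_0,\dots,x_k]$ and each $Q_i$ a monomial in $\k[x_{k+1},\dots,x_n]$. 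The remaining verifications---that the decomposition $W=\bigoplus_\mu(W\cap V_\mu)$ is a direct sum and that the factorization $V_\mu=Q_\mu W_\mu$ is valid---are routine linear algebra once the multiplicative-independence step is established.
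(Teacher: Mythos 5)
Your proposal is correct and follows essentially the same route as the paper: the paper decomposes $S^dV=\bigoplus_I S^{d-i}U\otimes K_{\lambda_{k+1}^{i_{k+1}}\cdots\lambda_n^{i_n}}$ and uses multiplicative independence of $\lambda_{k+1},\dots,\lambda_n$ to force any invariant subspace to split along this grading, which is exactly your eigenspace argument showing monomials with equal eigenvalue share the same tail exponents. If anything, your version spells out more carefully the step the paper states tersely (that a subrepresentation is a direct sum of its intersections with the pieces, each of the form $\langle P_1,\dots,P_s\rangle\cdot Q_I$ rather than a full $V_I$), so no gap.
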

\begin{proof}
 Denote by $V$ the vector space $H^0(\p^n,\OP(1))$, homogeneous coordinates $(x_0:...:x_n)$ define the basis of $V$. Since $V$ is dual to the underlying vector space of the projective space $\p^n$ then $f^*|_V$ is induced by the matrix $M = M_2^t$. Decompose $V$ into a direct sum of representations of $M$:
 \[
  V = U \oplus \bigoplus_{i = k+1}^n \k_{\lambda_{i}}.
 \]
 Here $U$ is generated by vectors $x_0,...,x_k$ and $\k_{\lambda_{i}}$ is a character generated by $x_i$. Since $H^0(\p^n, \OP(d)) = S^d V$ the action of $f^*$ on $H^0(X, \OP(d))$ is induced by the matrix $S^d M_1$. Then $S^dV$ decomposes into a sum of representations $S^d V = \bigoplus_I V_I$  where $I = (i_{k+1},\dots, i_n) \in Z_{\geqslant 0}^{n-k-1}$ is such that $i_{k+1}+\dots+ i_n = i$ and 
 \[
  V_I = S^{d-i}U \otimes K_{\lambda_{k+1}^{i_{k+1}}...\lambda_n^{i_n}}.
 \]
 Since  $\lambda_{k+1},...,\lambda_n$ are multiplicatively independent a subrepresentation of $S^d V$ is a sum $V_{I_1} + ... + V_{I_r}$. For any $I$ the representation $V_I$ is obviously generated by the necessary polynomials; thus, the proof is complete.
\end{proof}

In order to study subrepresentations of $M_2$ in $H^0(\p^n,\OP_{\p^n}(d))$ we need the following assertion.
\begin{lemma}\label{lemma: polynomial of x equals polynomial of x+1}
   Let $R$ be a commutative ring over $\k$ and let a polynomial $P(x)\in\k[x]$ and elements~$Q, r\in R$ be such that $P(x+r) = P(x) + Q$ and $r$ is not a zero divisor. Then there exists $Q'\in R$ such that $Q = r Q'$ and $P(x) = Q' x+\alpha$ where $\alpha\in R$.
 \end{lemma}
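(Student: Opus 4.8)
The plan is to write $P(x)=\sum_{i=0}^{d}a_i x^i$ with $a_i\in\k$ and (if $P\neq 0$) leading coefficient $a_d\neq 0$, substitute $x+r$ for $x$, and compare the two sides of the hypothesis $P(x+r)=P(x)+Q$ as an identity in the polynomial ring $R[x]$. Expanding by the binomial theorem gives a coefficient of $x^j$ in $P(x+r)$ equal to $\sum_{i=j}^{d}a_i\binom{i}{j}r^{\,i-j}$, which must agree with the coefficient of $x^j$ in $P(x)+Q$, namely $a_j$ for $j\geqslant 1$ and $a_0+Q$ for $j=0$. First I would record the resulting identities for $j\geqslant 1$, which after cancelling the $i=j$ term read $\sum_{i>j}a_i\binom{i}{j}r^{\,i-j}=0$.

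The key step is to use the $j=d-1$ identity to force $\deg P\leqslant 1$. If $d\geqslant 2$ then the only surviving term in that identity is $i=d$, giving $a_d\,d\,r=0$ in $R$. Since $r$ is not a zero divisor we may cancel it to obtain $a_d\,d=0$; because $\k$ has characteristic $0$ the integer $d$ is a nonzero scalar, hence invertible, so $a_d=0$, contradicting $a_d\neq 0$. Therefore $d\leqslant 1$ and we may write $P(x)=a_1 x+a_0$ with $a_0,a_1\in\k$ (allowing $a_1=0$ to cover the constant and zero cases).

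It then remains to read off $Q'$ and $\alpha$. With $P(x)=a_1x+a_0$ one computes $P(x+r)=a_1x+a_1r+a_0$, so the $j=0$ identity becomes $a_1r+a_0=a_0+Q$, i.e. $Q=a_1 r=r a_1$, using commutativity of $R$. Setting $Q'=a_1\in\k\subseteq R$ and $\alpha=a_0\in\k\subseteq R$ then yields $Q=rQ'$ and $P(x)=Q'x+\alpha$, which is exactly the assertion.

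I expect the only genuine subtlety to be the passage from $a_d\,d\,r=0$ to $a_d=0$: this requires invoking both that $r$ is a non-zero-divisor (to cancel $r$) and that $d\neq 0$ in $\k$ in characteristic $0$ (to cancel the integer $d$), together with the elementary fact that a nonzero scalar of $\k$ acts invertibly on a nonzero $\k$-algebra, so that it cannot annihilate a nonzero element; the degenerate case $R=0$ is trivial. Everything else is a routine comparison of coefficients.
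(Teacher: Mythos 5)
Your proof is correct and follows essentially the same route as the paper's: expand $P(x+r)$ by the binomial theorem, compare the coefficient of $x^{d-1}$ to get $d\,a_d\,r=0$ and hence $\deg P\leqslant 1$, then read off $Q'=a_1$ and $\alpha=a_0$ from the constant term. If anything, you justify the cancellation step (regularity of $r$ plus characteristic $0$, with the degenerate case $R=0$ noted) more carefully than the paper does, whose own expansion formula contains a typo at that point.
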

 \begin{proof}
  Assume that $a_0,\dots, a_{n-1}\in R$ and $a_n\in R\setminus\{0\}$ are such that $P(x) = \sum_{i=0}^n a_ix^i$, then we observe:
  \[
   P(x+r) = \sum_{i = 0}^n \left(\sum_{j = i}^n \binom{j}{j-i}a_jr^j\right)x^i.
  \]
  In particular, the coefficient of $x^n$ in $P(x+r)$ equals $a_n$. On the other hand, the coefficient of $x^n$ in the polynomial $P(x) + Q$ equals $1$ if $n\geqslant 1$ or $1+Q$ in the case $n=0$. If $n\geqslant 2$ the coefficient of $x^{n-1}$ in polynomials $P(x+r)$ and $P(x) + Q$ is equal to~$a_{n-1} + na_n r$ and $a_{n-1}$ respectively. Thus, $n$ is strictly less than 2.
  
  If $n = 1$ then the assumption implies $a_1 x + a_1r +a_0 = P(x+r) = P(x) + Q = a_1 x + a_0 + Q$. Thus, we get $Q' = a_1$ and $\alpha = a_0$. Finally, if $n=0$ then the assumption implies that $Q = 0$ and $\alpha = a_0$. 
 \end{proof}
 Lemma \ref{lemma: polynomial of x equals polynomial of x+1} implies the following description of irreducible subrepresentations of the automorphism induced by the matrix $M_2$ on the space $H^0(\p^n,\OP_{\p^n}(d))$.
 \begin{corollary}\label{corollary: M2-subrepresentations}
  Let $f$ be an automorphism of the projective space $\p^n$ induced by the matrix $M_2$ defined in~\textup{\eqref{eq: conjugacy classes of PGL in Bir}}. If $W\subset H^0(\p^n,\OP_{\p^n}(d))$ is an irreducible subrepresentation of $f^*$ then there exist a set of polynomials~\mbox{$P_0,\dots, P_m\in\k[x_0,x_2,\dots, x_k]$} and a monomial $Q\in \k[x_{k+1},x_{k+2},\dots, x_n]$ such that:
  \[
   W = \left\langle x_0^m P_0 Q, x_0^{m-1}(x_1P_0 + P_1)Q, \dots, \left(\sum_{i=0}^m x_1^{m-i}P_i\right)Q \right\rangle.
  \]
 \end{corollary}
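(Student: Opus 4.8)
The plan is to follow the strategy of Lemma~\ref{lemma: M1-subrepresentations} in the semisimple directions and to resolve the single unipotent block by an inductive argument powered by Lemma~\ref{lemma: polynomial of x equals polynomial of x+1}. As there, set $V=H^0(\p^n,\OP(1))$ with basis $x_0,\dots,x_n$. The operator $f^*$ on $V$ coming from $M_2$ has a commuting multiplicative Jordan decomposition $f^*=su=us$: the semisimple part $s$ is diagonal in the monomial basis of $S^dV=H^0(\p^n,\OP(d))$, scaling $x_i$ by $\mu_i$ for $i\geqslant 2$ and fixing the pair $x_0,x_1$, while the unipotent part $u$ acts only on $\langle x_0,x_1\rangle$ as a single Jordan block. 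Normalizing so that $x_0$ spans the line of eigenvectors, $u$ sends $x_1\mapsto x_0+x_1$ and fixes $x_0$. Both $s$ and $u$ extend to commuting operators on $S^dV$, and $f^*=su$ is their Jordan decomposition there.

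First I would peel off the multiplicatively independent variables exactly as in Lemma~\ref{lemma: M1-subrepresentations}. Writing $V'=\langle x_0,\dots,x_k\rangle$ and decomposing $S^dV=\bigoplus_B (S^{\bullet}V')\otimes\k\,x^B$ over monomials $x^B$ in $x_{k+1},\dots,x_n$, the multiplicative independence of $\mu_{k+1},\dots,\mu_n$ (and their independence from roots of unity) guarantees that distinct $B$ yield $s$-eigenvalues that are never root-of-unity multiples of one another. Hence each generalized eigenspace of $f^*$, and in particular each indecomposable subrepresentation $W$, is contained in a single summand $(S^eV')\otimes\k\,x^B$ with $e=d-|B|$; this produces the monomial $Q=x^B\in\k[x_{k+1},\dots,x_n]$. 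Restricting further to one $s$-eigenspace $E_\zeta\subseteq S^eV'$ (an indecomposable subrepresentation lies in a single one, since $s$ is semisimple and commutes with $u$), the operator $f^*$ becomes $\zeta\cdot u$ on $E_\zeta$, so it remains to describe the indecomposable $u$-invariant subspaces of $E_\zeta$.

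Since $x_1\mapsto x_0+x_1$ is the Taylor shift $u=\exp(x_0\partial_{x_1})$, the operators $u$ and the nilpotent $D=x_0\partial_{x_1}$ have the same invariant subspaces, and $D^l=x_0^l\partial_{x_1}^l$ because $x_0$ and $\partial_{x_1}$ commute. An indecomposable $u$-invariant subspace is therefore a cyclic module $W'=\langle g,\,x_0\partial_{x_1}g,\,\dots,\,x_0^m\partial_{x_1}^mg\rangle$, where $m$ is the $x_1$-degree of a generator $g$ and $D^{m+1}g=0$; its socle $D^mg$ is free of $x_1$. Expanding $g$ as a polynomial in $x_1$ with coefficients in $\k[x_0,x_2,\dots,x_k]$ reads off the data $P_0,\dots,P_m$ and, after normalizing the generator, writes the chain $\{D^lg\}$ in the staircase form of the statement, the factor $x_0^{m-l}$ recording the fall in $x_1$-degree and the inner sum $\sum_i x_1^{l-i}P_i$ the surviving $x_1$-powers; tensoring back with $Q$ completes the description.

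The conceptual core, and the step where Lemma~\ref{lemma: polynomial of x equals polynomial of x+1} enters, is to show that this staircase shape is forced and to identify the coefficients. I would induct on the distance from the socle: an element $w$ one level above the socle satisfies that $u(w)-w$, the result of substituting $x_1\mapsto x_0+x_1$ in $w$ and subtracting $w$, lies in the socle and is therefore free of $x_1$; writing $w$ as a polynomial in $x_1$ over $R=\k[x_0,x_2,\dots,x_k]$ and applying Lemma~\ref{lemma: polynomial of x equals polynomial of x+1} with $x=x_1$ and nonzerodivisor $r=x_0$ forces $w$ to be linear in $x_1$ and determines its leading coefficient. Iterating identifies at each level the newly appearing $P_i$ and shows the $x_1$-degree grows by exactly one per step. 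The two places where I expect real work are the coefficient bookkeeping, since the exact scalars relating $D^lg$ to the displayed generators are binomial and factorial in nature and must be tracked to match the precise formula, and the fact that $\mu_2,\dots,\mu_k$ are merely roots of unity and may satisfy multiplicative relations, so a single eigenspace $E_\zeta$ mixes several monomials in $x_2,\dots,x_k$; this is exactly why the $P_i$ must be allowed to be genuine polynomials in $\k[x_0,x_2,\dots,x_k]$ rather than monomials as in the $M_1$ case, and maintaining indecomposability throughout the construction is the main obstacle.
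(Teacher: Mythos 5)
Your reduction steps are sound and essentially parallel the paper's own proof: like the paper, you first peel off the multiplicatively independent eigenvalues exactly as in Lemma~\ref{lemma: M1-subrepresentations} to produce the monomial $Q$ (your observation that distinct monomials in $x_{k+1},\dots,x_n$ give eigenvalues that are not root-of-unity multiples of one another is precisely the needed point), and then analyze the single unipotent block; the paper does this by taking a Jordan basis $R_0,\dots,R_m$ with $f^*R_0=R_0$, $f^*R_i=R_i+R_{i-1}$ and invoking Lemma~\ref{lemma: polynomial of x equals polynomial of x+1}, while you phrase the same analysis through $D=x_0\partial_{x_1}$ and cyclic $\k[D]$-modules. (You also silently, and correctly, read ``irreducible'' as ``indecomposable'', which is what the paper actually uses when it decomposes $W=\bigoplus W_i$ in Lemma~\ref{lemma: invariant subvarieties under automorphisms of pn}; a unipotent operator has no irreducible subrepresentation of dimension greater than one.) Your description $W'=\langle g, Dg,\dots,D^mg\rangle$ with socle $D^mg$ free of $x_1$ is correct, and it is in fact more robust than the paper's induction, because Lemma~\ref{lemma: polynomial of x equals polynomial of x+1} as stated only applies when the increment $Q$ is free of $x$, which holds one level above the socle but fails at higher levels.

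The genuine gap is the step you flag as ``coefficient bookkeeping'' and then assert away: the chain $\{D^l g\}$ cannot in general be normalized to the displayed staircase. Indeed, writing $g=\sum_{i=0}^m x_1^{m-i}P_i$ one gets
\[
 D^l g \;=\; l!\, x_0^l \sum_{i=0}^{m-l}\binom{m-i}{l}\, x_1^{m-l-i}P_i ,
\]
and the binomial factor depends on both $l$ and $i$, so it cannot be absorbed into a single tuple $(P_0,\dots,P_m)$. Concretely, for $n=2$, $d=2$, $\mu_2=1$ and $g=x_1^2+x_1x_2$, the space $W=\bigl\langle x_0^2,\; x_0(2x_1+x_2),\; x_1^2+x_1x_2\bigr\rangle$ is a $3$-dimensional indecomposable $f^*$-invariant subspace, but matching $x_1$-coefficients shows no $P_0,P_1,P_2\in\k[x_0,x_2]$ put it in the stated form: the middle generator forces the $x_2$-coefficient of $P_1$ to be $P_0/2$, the top generator forces it to be $P_0$, hence $P_0=0$, a contradiction. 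To be fair, this defect is shared by the paper: its claimed recursion $R_i=x_1R_{i-1}/x_0+P_i$ does not satisfy $f^*R_i=R_i+R_{i-1}$ once $i\geqslant 2$, so the corollary as displayed is false for $m\geqslant 2$ and should carry the binomial coefficients above (equivalently, $W$ is the cyclic $\k[x_0\partial_{x_1}]$-module generated by the top element, tensored with $Q$). Your $D$-module route actually proves this corrected statement, and the correction is harmless downstream: in Lemma~\ref{lemma: invariant subvarieties under automorphisms of pn}, away from $\{x_0=0\}$ the vanishing of all $D^lg$ still triangularly forces $P_0=\dots=P_m=0$, so $X$ is the same cone. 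The right fix is therefore to keep your argument and amend the displayed basis, rather than to pursue the normalization you proposed.
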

 \begin{proof}
 By the same reason in the proof of Lemma \ref{lemma: M1-subrepresentations} we can reduce the question to the case when all eigenvalues of $M_2$ are roots of unity. Replacing $f$ by its iterate we can assume that $\mu_2 = ... = \mu_n = 1$.

  Consider a Jordan basis  $R_0, \dots, R_m \in  \k[x_0,\dots, x_k]$ of the operator $f^* = S^d M_2^t$ restricted to $W$. Since the representation $W$ is irreducible then the action of $f^* = S^d M_2^t$ is as follows:
  \[
   f^* R_i =\left\{\begin{aligned} 
                    &  R_0, && \text{ if $i = 0$};\\
                    &  R_i + R_{i-1}, && \text{ otherwise}.
                   \end{aligned}
      \right.
  \]
 By Lemma \ref{lemma: polynomial of x equals polynomial of x+1} we deduce that the polynomial $R_0$ lies in  $\k[x_0,x_2,\dots, x_n]$. Also we observe that for all~\mbox{$1\leqslant i\leqslant m$} the polynomial $R_i$ equals the following: 
 \[
  R_i = \frac{x_1R_{i-1}}{x_0} + P_i,
 \]
 where $P_i$ lie in $\k[x_0,x_2,\dots, x_n]$. Set $P_0 = R_0$ then we get the result.
 \end{proof}

 \section{Infinite order automorphisms}\label{section: results}
 \subsection{Invariant subvarieties of infinite order elements in $\Aut(\p^n)$} Here we describe the invariant subvarieties in the projective space $\p^n$ under the action of the automorphism induced by the matrix $M_1$ or~$M_2$ defined in \eqref{eq: conjugacy classes of PGL in Bir}.
 \begin{lemma}\label{lemma: invariant subvarieties under automorphisms of pn}
  Let $f\in \Aut(\p^n)$ be an automorphism induced by a matrix $M$ with respect to homogeneous coordinates $(x_0:x_1:\dots:x_n)$ and let $X$ be an $f$-invariant subvariety in $\p^n$. Assume that $X$ is irreducible and does not lie in a hyperplane $\{x_i = 0\}\subset \p^n$ for $0\leqslant i\leqslant n$. Then the following assertions hold.
  \begin{enumerate}
   \item[\textup{(1)}] Assume that $M = M_1$ defined in \textup{\eqref{eq: conjugacy classes of PGL in Bir}} and there exists $0\leqslant k\leqslant n-1$ such that the numbers $\lambda_1,\dots, \lambda_k$ are roots of unity and $\lambda_{k+1},\dots, \lambda_n$ are multiplicatively independent. Then $X$ is a cone whose vertex is the subspace $\{x_0 = \dots = x_k = 0\}$  over a subvariety in the subspace $\{ x_{k+1} = \dots = x_n = 0\}$.
   \item[\textup{(2)}] Assume that $M = M_2$ defined in \textup{\eqref{eq: conjugacy classes of PGL in Bir}} and there exists $1\leqslant k\leqslant n$ such that the numbers $\mu_2,\dots, \mu_k$ are roots of unity and $\mu_{k+1},\dots, \mu_n$ are multiplicatively independent. Then $X$ is a cone whose vertex is the subspace $\{x_0 = x_2 = \dots = x_k = 0\}$  over a subvariety in the subspace $\{ x_1 = x_{k+1} = \dots = x_n = 0\}$.
  \end{enumerate}
 \end{lemma}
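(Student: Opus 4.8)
The plan is to translate the geometric hypothesis into algebra and then feed the structural descriptions of invariant subspaces from Lemma \ref{lemma: M1-subrepresentations} and Corollary \ref{corollary: M2-subrepresentations} into the homogeneous ideal $I(X) \subseteq \k[x_0, \dots, x_n]$. Since $X$ is $f$-invariant, the ideal $I(X)$ is $f^*$-invariant, so each graded piece $I(X)_d \subseteq H^0(\p^n, \OP(d))$ is a subrepresentation of $f^*$. Because $X$ is irreducible, $I(X)$ is prime, and because $X \not\subseteq \{x_i = 0\}$ for every $i$, no coordinate $x_i$ lies in $I(X)$; hence by primeness no monomial in $x_0, \dots, x_n$ lies in $I(X)$. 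These two facts together form the engine of the proof: they let me strip monomial factors off the generators supplied by the structural results.

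In case (1) I would apply Lemma \ref{lemma: M1-subrepresentations} to each $I(X)_d$, writing it as $\langle P_1 Q_1, \dots, P_r Q_r\rangle$ with $P_j \in \k[x_0, \dots, x_k]$ and $Q_j$ a monomial in $x_{k+1}, \dots, x_n$. For each $j$, since $Q_j \notin I(X)$ and $I(X)$ is prime, the membership $P_j Q_j \in I(X)$ forces $P_j \in I(X)$; thus $P_j \in I(X) \cap \k[x_0, \dots, x_k]$ and $P_j Q_j$ lies in the ideal generated by $P_j$. Running over all $j$ and all $d$ shows that $I(X)$ is generated by polynomials in $x_0, \dots, x_k$ alone. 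This is precisely the cone condition: the defining equations ignore $x_{k+1}, \dots, x_n$, so together with any point of $X$ the whole span of that point and the vertex $\{x_0 = \dots = x_k = 0\}$ lies in $X$, with base $X \cap \{x_{k+1} = \dots = x_n = 0\}$.

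Case (2) follows the same outline but first requires decomposing each $I(X)_d$, as a module over the single operator $f^*$, into a direct sum of indecomposable invariant subspaces, so that Corollary \ref{corollary: M2-subrepresentations} applies to each summand. A typical summand is $W = \langle x_0^m P_0 Q, \; x_0^{m-1}(x_1 P_0 + P_1)Q, \; \dots, \; (\sum_{i=0}^m x_1^{m-i} P_i)Q\rangle$ with $P_i \in \k[x_0, x_2, \dots, x_k]$ and $Q$ a monomial in $x_{k+1}, \dots, x_n$. Here I would eliminate the coefficients $P_i$ successively: the first generator $x_0^m P_0 Q \in I(X)$ gives $P_0 \in I(X)$ by primeness, since $x_0, Q \notin I(X)$; subtracting $x_0^{m-1} x_1 P_0 Q \in I(X)$ from the second generator $x_0^{m-1}(x_1 P_0 + P_1)Q$ shows $x_0^{m-1} P_1 Q \in I(X)$, whence $P_1 \in I(X)$; and so on, yielding $P_0, \dots, P_m \in I(X) \cap \k[x_0, x_2, \dots, x_k]$. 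Every generator of $W$ then lies in the ideal generated by the $P_i$, so once more $I(X)$ is generated by polynomials in $x_0, x_2, \dots, x_k$, giving the cone with vertex $\{x_0 = x_2 = \dots = x_k = 0\}$ and base in $\{x_1 = x_{k+1} = \dots = x_n = 0\}$.

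I expect the main obstacle to be case (2): one must justify the decomposition of each $I(X)_d$ into indecomposable $f^*$-invariant subspaces so that Corollary \ref{corollary: M2-subrepresentations} applies termwise, and then carry out the successive elimination of $P_0, \dots, P_m$ correctly, each step relying on primeness to cancel the common factors $x_0$ and $Q$. By contrast, case (1) is essentially immediate once the observation that no monomial lies in the prime ideal $I(X)$ is in place, and in both cases the final passage from \emph{the ideal is generated in the base variables} to \emph{the variety is a cone} is routine.
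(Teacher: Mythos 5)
Your proof is correct and takes essentially the same route as the paper: the paper fixes a degree $m$ for which $X$ is cut out by degree-$m$ forms, applies Lemma \ref{lemma: M1-subrepresentations} (resp.\ decomposes $W \cong H^0(\p^n,\mathcal{I}_X(m))$ into indecomposable summands and applies Corollary \ref{corollary: M2-subrepresentations}) to this single invariant linear system, and uses irreducibility of $X$ together with $X \not\subseteq \{x_i = 0\}$ to discard the monomial factors $Q_j$ and the powers of $x_0$ --- exactly the mechanism you implement via primeness of $I(X)$. Your ideal-theoretic packaging, in particular the explicit successive elimination of $P_0,\dots,P_m$ in case (2), just spells out in more detail the step the paper compresses into ``we deduce that $X = \{P_0 = \dots = P_m = 0\}$'', so the two arguments coincide in substance.
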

 \begin{proof}
  There exists $m\geqslant 2$ such that $X$ equals the intersection of all hypersurfaces of degree $m$ containing $X$. Denote by $W\subset H^0(\p^n,\OP_{\p^n}(m))$ the subspace of these hypersurfaces i.e. $W \cong H^0(\p_n,\mathcal{I}_X(m))$ where~$\mathcal{I}_X$ is the sheaf of ideals of $X$. Then $W$ is an $f^*$-invariant subspace of $H^0(\p^n,\OP_{\p^n}(m))$. 
  
  If $M  = M_1$ then by Lemma \ref{lemma: M1-subrepresentations} the space $W$ is generated by the set of polynomials $P_1Q_1,\dots, P_mQ_m$ where $P_i\in\k[x_0,\dots, x_k]$ and $Q_i\in \k[x_{k+1},\dots,x_n]$ is a monomial for all $1\leqslant i\leqslant n$. Since $X$ does not lie in the hyperplane $\{x_i = 0\}\subset \p^n$ for $0\leqslant i\leqslant n$ we deduce that $X$ equals the following:
  \[
   X = \{P_1 = \dots = P_m = 0\}.
  \]
 Since this set is a cone whose vertex is the subspace $\{x_0 = x_2 = \dots = x_k = 0\}$  over a hyperplane in the subspace $\{ x_1 = x_{k+1} = \dots = x_n = 0\}$ we get the result.
  
  If $M = M_2$ then denote by $W_1,\dots, W_r$ the irreducible subrepresentations of $W$ i.e. $W = \bigoplus_{i=1}^rW_i$. Denote by $X_i$ the following subvariety in $\p^n$:
  \[
   X_i = \bigcap_{P\in W_i} \{P = 0\}.
  \]
  Then $X$ equals the intersection of $X_1,\dots, X_r$. If for all $1\leqslant i\leqslant r$ the variety $X_i$ is a cone with a vertex in  the subspace $\{ x_1 = x_{k+1} = \dots = x_n = 0\}$ then so is $X$. Thus, it suffices to prove the assertion in the case when $W$ is irreducible.
  
  If $W$ is irreducible then Corollary \ref{corollary: M2-subrepresentations} provides the description of a basis of $W$. Since $X$ does not lie in a hyperplane $\{x_i = 0\}\subset \p^n$ for $0\leqslant i\leqslant n$ we deduce that
  \[
   X = \{P_0 = \dots = P_m = 0\},
  \]
  in notation of Corollary \ref{corollary: M2-subrepresentations}. Thus, $X$ is a cone with a vertex in  the subspace $\{ x_0 = x_2 = \dots = x_k = 0\}$ and the proof is complete.
 \end{proof}

 \subsection{Action of automorphisms on $\Pic^0(X)$ and $\Alb(X)$}
  Recall that the {\bf Picard group $\Pic(X)$} is the algebraic group of  line bundles on $X$. By $\Pic^0(X)$ we denote connected component of $\Pic(X)$ containing the structure sheaf $\OP_X$. The quotient group $\Pic(X)/\Pic^0(X)$ is called the {\bf Neron--Severi group $NS(X)$ of~$X$}. If $X$ is smooth and projective then the rank of the discrete abelian group $\NS(X)$ is finite. 
  
  If $X$ is a smooth projective and complex variety then the algebraic group $\Pic^0(X)$ can be identified with the abelian variety $H^1(X,\OP_X)/H^2(X,\ZZ)$, see, for instance, \cite[Section 7.2.2]{Voisin_book}. In view of this we can show that the inverse image of an automorphism of $X$ whose growth is bounded induces finite order operators on $\Pic^0(X)$ and $\NS(X)$.
 \begin{lemma}\label{lemma: bounded growth implies finite order on Pic0 and NS}
  Let $X$ be a smooth projective complex variety and let $f\in \Aut(X)$ be its automorphism whose growth is bounded. Then there exists $N\geqslant 0$ such that the inverse images
  \begin{align*}
   f^*_{\NS}\colon \NS(X)\to \NS(X); && f^*_{\Pic^0}\colon \Pic^0(X)\to \Pic^0(X)
  \end{align*}
   are identities.
 \end{lemma}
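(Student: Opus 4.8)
The plan is to handle the two operators $f^*_{\NS}$ and $f^*_{\Pic^0}$ separately and then take a common power $N$, using the complex (Kähler) structure only for the second one. I would first dispose of $\NS(X)$. Since $f$ is an automorphism, each $(f^n)^*H$ is ample and satisfies $(f^n)^*H\cdot H^{\dim X-1}\leqslant C$; as the slice $\{D\text{ nef}: D\cdot H^{\dim X-1}\leqslant C\}$ of the nef cone is compact, the orbit of $H$ is bounded, and comparing an arbitrary ample class with a large multiple of $H$ shows every ample class has bounded orbit. Ample classes span $\NS(X)_{\RR}$, so $\|(f^n)^*|_{\NS(X)_{\RR}}\|$ is bounded. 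Thus $(f^n)^*|_{\NS(X)}$ is an integral matrix $A$ with $\det A=\pm1$ and bounded powers; hence every eigenvalue has modulus $\leqslant 1$, the product of the moduli being $1$ forces all eigenvalues onto the unit circle, boundedness rules out nontrivial Jordan blocks, and the eigenvalues are algebraic integers all of whose conjugates (again eigenvalues) have modulus $1$. By Kronecker's theorem they are roots of unity, so $A$ is semisimple with roots-of-unity eigenvalues and has finite order; since the torsion of $\NS(X)$ is finite, some power of $f^*$ is the identity on $\NS(X)$.

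Let $m$ be the order of $f^*$ on $\NS(X)$. Then $\eta_0:=\tfrac1m\sum_{j=0}^{m-1}(f^j)^*H$ is a finite average of ample classes, hence ample, and is fixed by $f^*$ because $f^*$ permutes the summands cyclically; I fix a Kähler form representing $\eta_0$. Turning to $\Pic^0(X)$, I would use the identification $\Pic^0(X)\cong H^1(X,\OP_X)/H^1(X,\ZZ)$: the action of $f^*$ on $\Pic^0(X)$ is induced by the $\CC$-linear action of $f^*$ on $V=H^1(X,\OP_X)=H^{0,1}(X)$ preserving the lattice, and it has finite order as soon as $f^*$ has finite order on $V$, equivalently on the complex conjugate space $H^{1,0}(X)=H^0(X,\Omega^1_X)$.

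On $H^{1,0}(X)$ I would consider the Hermitian form
\[
 Q(\omega,\omega')=i\int_X \omega\wedge\overline{\omega'}\wedge\eta_0^{\dim X-1}.
\]
Because $H^1(X)$ is primitive and $\eta_0$ is ample, the Hodge--Riemann bilinear relations make $Q$ positive definite. It is also $f^*$-invariant: since $f^*\eta_0=\eta_0$ the integrand equals $f^*\bigl(\omega\wedge\overline{\omega'}\wedge\eta_0^{\dim X-1}\bigr)$, and $\int_X f^*\xi=\int_X\xi$ as $f$ is an orientation-preserving diffeomorphism of degree one. Hence $f^*|_{H^{1,0}(X)}$ is unitary for $Q$, so its eigenvalues have modulus $1$ and it is diagonalizable. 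These eigenvalues together with their conjugates are the roots of the integral characteristic polynomial of $f^*$ on $H^1(X,\ZZ)$, so by Kronecker's theorem they are roots of unity; being diagonalizable with roots-of-unity eigenvalues, $f^*$ has finite order on $H^{1,0}(X)$, hence on $V$ and on $\Pic^0(X)$.

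Taking $N$ to be a common multiple of the two orders then finishes the proof. I expect the $\Pic^0$ statement to be the main obstacle: bounded growth controls $f^*$ only on the algebraic classes $\NS(X)$ and not on transcendental $(1,1)$-classes, so one cannot bound $f^*$ on $H^{1,0}(X)$ directly by pushing its eigenvectors into $H^{1,1}(X)$ via cup products $\omega\wedge\overline\omega$. The device that resolves this is to first extract an honest $f^*$-invariant \emph{ample} class from the finite order established on $\NS(X)$, and only then feed it into the Hodge--Riemann form, which is exactly where complex Kähler geometry is indispensable.
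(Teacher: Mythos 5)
Your proof is correct, and on the harder half it takes a genuinely different route from the paper. For $\NS(X)$ the two arguments are the same in spirit (bounded powers of an integral matrix force unit-circle eigenvalues, no Jordan blocks, then Kronecker), except that you supply the details — the compact slice of the nef cone, $\det=\pm1$, the passage through torsion — which the paper compresses into one sentence. For $\Pic^0(X)$ the paper argues by contradiction: a non-root-of-unity eigenvalue $\lambda$ of $f^*$ on $H^{1,0}(X)$ is taken to have $|\lambda|\neq 1$, and then the nonzero class $\xi\wedge\overline{\xi}\in H^{1,1}(X)$ is an eigenvector with eigenvalue $|\lambda|^2\neq 1$, contradicting bounded growth. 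You instead average $H$ over the finite orbit on $\NS(X)$ to get an $f^*$-invariant ample class $\eta_0$, feed it into the Hodge--Riemann form $i\int_X\omega\wedge\overline{\omega'}\wedge\eta_0^{\dim X-1}$, and conclude $f^*|_{H^{1,0}}$ is unitary, hence semisimple with unit-circle eigenvalues, and then Kronecker finishes. Your route buys two things the paper leaves implicit: (i) the paper's contradiction needs bounded growth, measured only against the ample class $H$, to control the eigenvalue $|\lambda|^2$ on a possibly \emph{transcendental} class in $H^{1,1}(X)$ — this is true, since $(f^n)^*$ preserves the nef cone and its norm on $H^{1,1}$ is comparable to $(f^n)^*H\cdot H^{\dim X-1}$ by a standard positivity argument (as in Dinh--Sibony), but the paper does not say so, and your unitarity argument sidesteps the issue entirely (so your parting claim that this step ``cannot'' be done is slightly overstated: it can, but it costs exactly the lemma you avoid); (ii) the paper's step ``not a root of unity, thus $|\lambda|\neq 1$'' silently replaces $\lambda$ by a Galois conjugate off the unit circle (and must check that conjugate is again an eigenvalue on $H^{1,0}$ or $H^{0,1}$), whereas in your setup all eigenvalues have modulus $1$ from the start and Kronecker applies directly; you also implicitly correct the paper's typo $H^1(X,\OP_X)/H^2(X,\ZZ)$ to $H^1(X,\OP_X)/H^1(X,\ZZ)$.
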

 \begin{proof}
  First consider the inverse image $f^*_{\NS}\colon \NS(X)\to \NS(X)$. The rank of the abelian group $\NS(X)$ is finite; thus, it suffices to prove that the order of the operator $f^*_{\NS_{\RR}}\colon \NS_{\RR}(X)\to \NS_{\RR}(X)$ is finite, here by~\mbox{$\NS_{\RR}(X)$} we denote the~\mbox{$\RR$-vec}\-tor space  $\NS(X)\otimes_{\ZZ} \RR$. Then either $f^*_{\NS_{\RR}}$ is diagoizable over $\CC$ and all its eigenvalues are roots of unity or the growth of $f$ is not bounded. Thus, the order of $f^*_{\NS_{\RR}}$ is finite.
  
  The inverse image~$f^*_{\Pic^0}$ preserve a point on $\Pic^0(X)$. Indeed, $f^*\OP_X = \OP_X$ in the Picard group of $X$. Since $\Pic^0(X)$ is an abelian variety whose universal cover is canonically isomorphic to  $H^1(X,\OP_X)$. Thus, it suffices to show that the order of the following operator is finite:
  \[
  f_{H^{1,0}}^*\colon H^1(X,\OP_X)\to H^1(X,\OP_X).
  \]
  By Hodge decomposition one has $H^1(X,\CC)\cong H^{1,0}(X) \oplus H^{0,1}$ and $H^1(X,\OP_X)\cong H^{1,0}(X)$. Since the inverse image map $f^*_{H^1(X,\CC)}\colon H^1(X,\CC)\to H^1(X,\CC)$ preserves the lattice $H^1(X,\ZZ)$ all eigenvalues of $f_{H^{1,0}}^*$ are algebraic integers. 
  
  Assume there is an eigenvalue of $f_{H^{1,0}}$ that equals $\lambda$  and it is not a root of unity; thus, $|\lambda|\ne 1$. Then there is a non-zero eigenclass $\xi\in H^{1,0}(X)$ such that $f^*\xi = \lambda \xi$. Consider the class $\xi\wedge \overline{\xi}\in H^{1,1}(X)$. This class is non-zero by construction and
  \[
  f^*(\xi\wedge \overline{\xi}) = |\lambda|^2\ne 1.
  \]
  Thus, there is an eigenvalue of $f^*_{H^{1,1}(X)}\colon {H^{1,1}(X)}\to {H^{1,1}(X)}$ whose absolute value is not equal to 1. This contradicts the assumption that the growth of $f$ is bounded. Thus, all eigenvalues of $f_{H^{1,0}}^*$ are roots of unity. By the same reason $f_{H^{1,0}}^*$ has no non-trivial Jordan blocks. Thus, the proof is complete.
 \end{proof}
 The next technical assertion will be necessary for the study of automorphisms inducing identity on the Albanese variety.
 \begin{lemma}\label{lemma: bounded order automorphisms of projective bundles}
  Let $T$ be a smooth projective complex variety, let $F\in \PGL(n+1, \CC(T))$ be a birational automorphism  of the direct product $\p^n\times T$ and let $X$ be an $F$-invariant irreducible subvariety of $\p^n\times T$ such that $\pr_2(X) = T$ where $\pr_2\colon \p^n\times T\to T$ is the projection to the second component of the product. If~$X_{\CC(T)}$ does not coincide with a hypersubspace in $\p^n_{\CC(T)}$ and the growth of $F|_{X}$ is bounded, then there exists a birational map $\alpha\colon X\dashrightarrow Y$ and an ample line bundle $\L\in\Pic(Y)$ such that $g = \alpha\circ f\circ \alpha^{-1}$ is a regular automorphism of $Y$ and $g^*\L = \L$.
 \end{lemma}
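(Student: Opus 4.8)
The plan is to pass to the generic fiber over the function field $K:=\CC(T)$ and to extract the invariant ample class from the single observation that a projective linear automorphism fixes the hyperplane class. Over $K$ the map $F$ is an honest element of $\Aut(\p^n_{K})=\PGL(n+1,K)$, the generic fiber $X_{\CC(T)}=X_{K}$ is an $F$-invariant subvariety of $\p^n_{K}$, and $f:=F|_{X}$ is a birational self-map of the complex variety $X$ whose growth is bounded. Since any element of $\PGL(n+1,K)$ fixes $\OP_{\p^n_{K}}(1)$, restriction gives an $F$-invariant ample line bundle $\OP(1)|_{X_{K}}$ on the generic fiber; the entire difficulty is to realize this invariance on a genuine projective model over $\CC$, that is, to upgrade it from the generic fiber to an honest equality in $\Pic(Y)$.

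First I would regularize. As recalled in the introduction, bounded growth forces the degrees of the iterates $f^{m}$ to stay bounded, so $f$ generates a group of bounded degree and is regularizable: there are a smooth projective complex variety $Y$, an automorphism $g\in\Aut(Y)$ and a birational map $\alpha\colon X\dashrightarrow Y$ with $g=\alpha\circ f\circ\alpha^{-1}$. To pin down the model precisely I would distinguish two cases. If $F$ has finite order in $\PGL(n+1,K)$ then $f$ has finite order and $g^{*}$ has finite order $N$ on all of $\Pic(Y)$, so averaging any ample $A$ over the orbit, $\L:=\sum_{i=0}^{N-1}(g^{i})^{*}A$, already gives $g^{*}\L=\L$ with $\L$ ample. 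If $F$ has infinite order I would apply Theorem \ref{theorem: conjugacy classes of PGL in Bir} to conjugate $F$, by a birational map $\beta$ of $\p^{n}_{K}$, to one of the normal forms $M_{1}$ or $M_{2}$ of \eqref{eq: conjugacy classes of PGL in Bir}; by Lemma \ref{lemma: exc and ind of blanc conjugation} the indeterminacy and exceptional loci of $\beta$ lie in coordinate hyperplanes, so $\beta(X_{K})$ is still not contained in any $\{x_{i}=0\}$ and Lemma \ref{lemma: invariant subvarieties under automorphisms of pn} applies.

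That lemma identifies $\beta(X_{K})$ with a cone: its vertex is the coordinate subspace spanned by the multiplicatively independent eigendirections, and its base $B$ lies in the complementary subspace, on which $F$ acts through an automorphism $h$ all of whose eigenvalues are roots of unity, hence of \emph{finite order}. Projecting the cone from its vertex exhibits $Y$ as birational to $\overline{B}\times\p^{n-k}$, where $g$ respects the projection to $\overline{B}$ over the finite order automorphism $h$ and acts on the $\p^{n-k}$-fibers through $\PGL(n-k+1)$. Now finiteness of $h$ makes $h^{*}$ of finite order on the whole of $\Pic(\overline{B})$, in particular on $\Pic^{0}(\overline{B})=\Pic^{0}(Y)$; meanwhile the fiberwise $\PGL$-action fixes $\OP(1)$. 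Combining these controls $g^{*}$ on $\Pic(Y)$ finely enough that a $g$-invariant ample class can be averaged out at the level of line bundles, and not merely numerically, which finishes this case.

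The principal obstacle is the bookkeeping of fields of definition. Theorem \ref{theorem: conjugacy classes of PGL in Bir} requires an algebraically closed base, so a priori $\beta$ and the normal form are defined only over $\overline{\CC(T)}$, that is over the function field of some generically finite cover $T'\to T$; one must check that replacing $T$ by $T'$ leaves the growth of $f$ bounded—degrees are comparable under an equivariant generically finite cover—and then descend the regular model and the invariant bundle to a $\CC$-variety birational to $X$ itself. The second, more conceptual, difficulty is exactly the upgrade from numerical to genuine invariance of $\L$ in $\Pic(Y)$: this is where the finite order of the base automorphism $h$ (coming from the roots-of-unity eigenvalues) is indispensable, and it is precisely the step that fails in the translation-on-$\Alb(X)$ situation of case (2) of Theorem \ref{theorem: automorphism of bounded growth implies ruled structure or translation on albanese}. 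This also explains why Lemma \ref{lemma: bounded growth implies finite order on Pic0 and NS}, which only controls the actions on $\NS(Y)$ and on $\Pic^{0}(Y)$ separately and not the extension between them, cannot by itself deliver the $\Pic$-level statement here.
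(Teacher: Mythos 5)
There is a genuine gap: your proposal never uses the bounded-growth hypothesis to control the eigenvalues of $F$ \emph{as functions on $T$}, and that is the whole content of the paper's proof. The paper's dichotomy is not finite versus infinite order of $F$, but whether the splitting field $K$ of the characteristic polynomial of $F$ is $\CC$ itself or an infinite extension of $\CC$. If the eigenvalues are constant, $F$ is (up to a birational conjugation over $T$) of the form $f\times\mathrm{Id}_T$ and one simply restricts an ample bundle from $\p^n\times T$ — no cones, no averaging needed, even when $F$ has infinite order. If some eigenvalue is transcendental over $\CC$, the paper shows this \emph{contradicts} bounded growth: after putting $F$ in the normal form $M_1$ or $M_2$ over $K$, it takes the divisor $D=\{L(x)=0\}$, computes the proper preimages $D_N$ under $F^N$, and intersects with a curve $C\subset X$ dominating a curve $C'\subset T$ on which $\lambda_n$ is non-constant; the points $((0:\dots:0:1),t)$ with $t$ a zero of $\lambda_n^N|_C$ force $(F^N)^*(\xi)\cdot[C]\to\infty$. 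So the infinite-extension case is vacuous under the hypotheses, and the lemma reduces to the trivial constant case. Your proposal has no counterpart to this unboundedness argument.

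Instead, you attempt to construct the invariant ample bundle in the infinite-order case via the cone decomposition over $\overline{\CC(T)}$, and the step you yourself flag as the ``more conceptual difficulty'' — upgrading invariance of $\OP(1)$ on the generic fiber to invariance of an ample bundle on a model over $\CC$ — is not merely unfinished, it is false without first ruling out non-constant eigenvalues. Take $T=\p^1$ and $F=\diag(1,t)\in\PGL(2,\CC(t))$ acting on $X=\p^1\times\p^1$: on the generic fiber $F$ fixes $\OP(1)$ and acts through $\PGL(2)$ fiberwise, exactly your situation with trivial base $\overline{B}$, yet $(F^N)^*\OP(1,0)=\OP(1,N)$ because the pullbacks acquire vertical components, so no invariant big and nef class exists on any model (and indeed the growth is unbounded, so the hypothesis fails — which is what must be proved, not assumed away). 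Relatedly, the cone structure from Lemma \ref{lemma: invariant subvarieties under automorphisms of pn} applied over $\overline{\CC(T)}$ describes only the generic fiber; the ``product'' $\overline{B}\times\p^{n-k}$ is relative over a cover $T'\to T$, the fiberwise eigenvalues $\lambda_{k+1},\dots,\lambda_n\in\overline{\CC(T)}$ vary, and the diagnosis that the obstruction lives in the extension of $\NS$ by $\Pic^0$ misses the actual failure mode, which is numerical (vertical twists growing with $N$), detected in the paper precisely by the curve computation. Your finite-order case (averaging an ample class) is fine but trivial; the missing idea is the intersection-theoretic argument showing bounded growth forces the spectrum of $F$ to be constant.
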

 \begin{proof}
 Consider the characteristic polynomial $\chi$ of matrix $M$ in the equivalence class of $F$ with respect to homogeneous coordinates $(x_0:\dots:x_n)$. Then $\chi$ is a polynomial over the field $\CC(T)$ and choosing a good representative $M$ we can suppose that 1 is a root of $\chi$. Denote by $K$ the splitting field of $\chi$.
 
  First we assume that the field extension $\CC\subset K$ is finite. Then $\CC = K$ since $\CC$ is algebraically closed. Therefore,~$F$ is a regular automorphism $f\times \mathrm{Id}_T\in \Aut(\p^n\times T)$. We choose an ample line bundle $\L'$ in~\mbox{$\Pic^0(\p^n\times T)$} and set $Y = X$ and $\L$ is the inverse image of $\L'$ to $X$.

  Now assume that the field extension $\CC\subset K$ is infinite. Fix homogeneous coordinates  $(x_0:\dots:x_n)$ associated to a Jordan basis of $F$ over the field $K$. By Theorem \ref{theorem: conjugacy classes of PGL in Bir} and Lemma \ref{lemma: exc and ind of blanc conjugation} we can assume that $F$ is induced by matrix $M_1$ or $M_2$ defined in \eqref{eq: conjugacy classes of PGL in Bir}. Since at least one eigenvalue of $F$ is outside $\CC$ we observe that there exists $k\leqslant n-1$ such that~$\lambda_0,\dots, \lambda_k$ are roots of unity and $\lambda_{k+1},\dots, \lambda_n$ are multiplicatively independent. Fix elements $a_0,\dots, a_n\in K^*$ such that the function $L(x) = \sum_{i=1}^n a_i x_i$ defines a non-empty hyperplane in $\p^n_{\CC}$. 
  
  Let $\xi$ be the class of the divisor $D= \{L(x) = 0\}\subset \p^n\times T$ in the Neron--Severi group $\NS(\p^n\times T)$. Fix an integer $N\geqslant 1$ and define the divisor $D_N =  \{L(F^N(x)) = 0\}\subset \p^n\times T$ where $L(\lambda^Nx)$ is the following function:
  \[
   L(\lambda^N x) =\left\{ \begin{aligned}
                    & \sum_{i=0}^n a_i \lambda_i^N x_i, &&\text{ if $F$ is induced by the matrix $M_1;$}\\
                    & (a_0 + N a_1) x_0 + \sum_{i=1}^n a_i \lambda_i^N x_i, &&\text{ if $F$ is induced by the matrix $M_2.$}
                    \end{aligned}
    \right.
  \]
  Since $D_N$ is the proper preimage of $D$ under $F^N$ then $(F^N)^*(\xi) \geqslant [D_N]$. We are going to show that there exists a curve $C$ in $X$ such that $[D_N] \cdot[C]$ tends to infinity. Thus, $(F^N)^*(\xi)\cdot [C]$ also tends to infinity which contradicts to the assumption that $F |_X$ is bounded.

  Fix a curve $C\subset X$ such that $C' = \pr_2(C)$ is a curve on $T$ such that $\lambda_n|_{C'}$ is not constant. The rational function $L(\lambda^Nx)$ is not constant on $C$ for any $N\geqslant 0$ since $a_n\ne 0$.
  Then the intersection $D_N\cap C$ is finite and it contains the points $((0:\dots:0:1), t)$ where $t\in C$ is a zero of $(\lambda_n)^N|_{C}$. Since $\lambda_n|_{C'}$ is not constant the sequence of intersection numbers $(f^N)^*(\xi)\cdot [C]$ is not bounded and the proof is complete.
 \end{proof}
 Recall that given a smooth projective variety $X$ there exists an abelian variety $ \Alb(X)$ which is called the {\bf Albanese variety} of $X$ and a morphism $a\colon X\to  \Alb(X)$ which is universal i.e. for any algebraic morphism $b\colon X\to B$ to an abelian variety $B$ there exists a unique morphism of varieties $\varphi \colon  \Alb(X)\to B$ such that the following diagram commutes:
\[\begin{tikzcd}[ampersand replacement=\&]
	X \\
	 \Alb(X) \& B
	\arrow["a"', from=1-1, to=2-1]
	\arrow["b", from=1-1, to=2-2]
	\arrow["\varphi"', from=2-1, to=2-2]
\end{tikzcd}\]
 In particular, any automorphism of $X$ factors through the Albanese variety i.e. for any $f\in \Aut(X)$ there exists $f_a\in \Aut(\Alb(X))$ such that $a\circ f = f_a \circ a$. Now we are ready to show that an automorphism whose growth is bounded either induces an infinite order automorphism of the Albanese variety or preserves a big and nef line bundle.
 \begin{corollary}\label{corollary: either translation on albanese or invariant big nef bundle}
  Let $X$ be an algebraic variety over $\CC$ and let $f\in \Aut(X)$ be its automorphism whose growth is bounded. Then there exists $N\geqslant 1$ such that the induced automorphism $f_a^N\colon \Alb(X)\to \Alb(X)$ is a translation. Moreover, if $f_a^N = \mathrm{Id}_{\Alb(X)}$ then there exists a big and nef line bundle $\L\in \Pic(X)$ such that $f^*\L = \L$.
 \end{corollary}
 \begin{proof}
  By Lemma \ref{lemma: bounded growth implies finite order on Pic0 and NS} we deduce that there exists $N\geqslant 1$ such that $(f^N)^*_{\Pic^0} = \mathrm{Id}_{\Pic^0(X)}$. Since the abelian varieties $\Pic^0(X)$ and $\Alb(X)$ are dual this implies that $f_a^N$ is a translation.
  
  Now assume that $(f^N)^*_{\NS} = \mathrm{Id}_{\NS(X)}$ and $f_a^N = \mathrm{Id}_{\Alb(X)}$. For simplicity further we  use the notation~\mbox{$A = \Alb(X)$}.  Fix a very ample line bundle $\M\in \Pic(X)$, then there exists a line bundle $\F\in \Pic^0(X)$ such that $f^*\M = \M\otimes \F$. Consider the Albanese morphism $a\colon X\to A$, by construction one has the following isomorphism:
  \[
   a^*\colon \Pic^0(A)\to \Pic^0(X).
  \]
  Thus, there exists a line bundle $\F'\in\Pic^0(A)$ such that $\F = a^*\F'$. Fix a Zariski open subset $U\subset A$ such that $\F'|_{U} = \OP_U$. Denote by $X_U$ the preimage $X_U = a^{-1}(U)$. Then one has $(f^N)^*|_{X_U}(\M|_{X_U}) = \M|_{X_U}$. The linear system of $\M$ induces the embedding $\varphi_U\colon X_U \to \p^n\times U$ and the automorphism $f^N$ induces an automorphism $F_U$ of the product $\p^n\times U$. Since $f$ induces identity on $A$ then the automorphism $F_U$ is an element in the group $\PGL(n+1, \CC[U])$ and $X_U$ is an $F_U$-invariant subvariety.
  
  Denote by $\widetilde{X}$ the proper image of $X_U$ under the embedding $\p^n\times U\hookrightarrow \p^n\times A$. The automorphism~$F_U$ induces a birational automorphism $\widetilde{F}$ of $\p^n\times U$ such that $\widetilde{X}$ is $\widetilde{F}$-invariant and the growth of $\widetilde{F}|_{\widetilde{X}}$ is bounded. Note that $\widetilde{X}_{\CC(A)}$ is not a hyperplane in $\p^n_{\CC(A)}$ since the embedding is defined by the linear system on $\widetilde{X}_{\CC(A)}$. Then by Lemma \ref{lemma: bounded order automorphisms of projective bundles} there exists an ample line bundle $\widetilde{\L}$ on $\widetilde{X}$ such that 
  \[
   (\widetilde{F}|_{\widetilde{X}})^*{\widetilde{\L}} = \widetilde{\L}.
  \]
  Denote by $\alpha\colon X\to \widetilde{X}$ the birational map induced by the embedding $\p^n\times U\hookrightarrow \p^n\times A$. The inverse image~$\L' = \alpha^*\widetilde{\L}$ is a big and nef line bundle and one has $(f^N)^*\L' = \L'$. Set
  \[
   \L = \L'\otimes f^*\L'\otimes \dots\otimes (f^{(N-1)})^*\L'.
  \]
  Then $\L$ is the big and nef line bundle with the property $f^*\L = \L$. The proof is complete.
 \end{proof}

 \subsection{Proofs}
 In this section we complete proofs of Theorem \ref{theorem: automorphism of bounded growth implies ruled structure or translation on albanese} and Corollary \ref{corollary: generalization of Blanc-Deserti}. We will need the following assertion describing the bounded order automorphisms which preserve a big and nef line bundle.
 \begin{lemma}\label{lemma: proof of thm in the case of invariant big nef line bundle}
  Let $X$ be a smooth projective variety over an algebraically closed field of characteristic $0$  and let $f\in \Aut(X)$ be an automorphism whose growth is bounded and the order is infinite. If there exists a big and nef line bundle $\L\in \Pic(X)$ such that $f^*\L = \L$ then we get the case $(3)$ of Theorem \textup{\ref{theorem: automorphism of bounded growth implies ruled structure or translation on albanese}}. 
 \end{lemma}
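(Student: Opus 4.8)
The plan is to turn the invariant line bundle into a birational model of $X$ inside a projective space on which $f$ becomes regular, to put that regular automorphism into Blanc's normal form, and then to read off the product structure from the cone description of Lemma \ref{lemma: invariant subvarieties under automorphisms of pn}. Since $\L$ is big, for $d\gg 0$ the complete linear system of $\L^{\otimes d}$ defines a rational map $\phi\colon X\dashrightarrow \p^N$ that is birational onto its image $X'$, and $X'$ lies in no hyperplane. As $f^*\L^{\otimes d}\cong \L^{\otimes d}$, the automorphism $f$ acts, up to scalar, on $H^0(X,\L^{\otimes d})$ and hence induces $F\in\Aut(\p^N)$ with $\phi\circ f=F\circ\phi$; thus $X'$ is $F$-invariant and $F|_{X'}$ is conjugate to $f$ via $\phi$. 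In particular $F$ has infinite order, since otherwise $f$ would have finite order.

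\emph{Normal form and nondegeneracy.} By Theorem \ref{theorem: conjugacy classes of PGL in Bir} there is $\alpha_0\in\Bir(\p^N)$ with $F_1:=\alpha_0 F\alpha_0^{-1}$ induced by $M_1$ or $M_2$ from \eqref{eq: conjugacy classes of PGL in Bir}, and by Lemma \ref{lemma: exc and ind of blanc conjugation} the loci $\Ind(\alpha_0)$ and $\Exc(\alpha_0)$ lie in a union of hyperplanes; set $X'':=\alpha_0(X')$, an irreducible $F_1$-invariant subvariety birational to $X$ with $F_1|_{X''}$ conjugate to $f$. To apply Lemma \ref{lemma: invariant subvarieties under automorphisms of pn} I must first arrange that $X''$ lies in no coordinate hyperplane $\{x_i=0\}$. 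For this I replace $\p^N$ by the smallest coordinate subspace $L\cong\p^n$ containing $X''$. One checks that $L$ is $F_1$-invariant: for $M_1$ every coordinate subspace is invariant, while for $M_2$ the identity $F_1(\{x_1=0\})=\{x_1-x_0=0\}$ forces $X''\subset\{x_1=0\}$ to imply $X''\subset\{x_0=0\}$. Moreover $F_1|_L$ is again of type $M_1$ or $M_2$, because subsets of roots of unity, resp. of multiplicatively independent numbers, keep these properties, and after rescaling the first eigenvalue is again $1$. After this reduction $X''$ is coordinate-nondegenerate and $F_1|_L$ still has infinite order.

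\emph{Cone and product.} Now Lemma \ref{lemma: invariant subvarieties under automorphisms of pn} shows that $X''$ is a cone with vertex a coordinate subspace $V$ over a base $B$ in the complementary coordinate subspace. The projection $\pi$ away from $V$ maps $X''$ onto $B$ and is $F_1$-equivariant, and the automorphism $h$ it induces on $B$ acts through the root-of-unity eigenvalues only ($\diag(1,\lambda_1,\dots,\lambda_k)$ for $M_1$, resp. $\diag(1,\mu_2,\dots,\mu_k)$ for $M_2$), hence has finite order. Trivialising this linear projection by the explicit birational map that records $\pi$ together with the fibre-direction coordinates while sharing the eigenvalue-one coordinate $x_0$ — in the $M_1$ case
\[
 X''\dashrightarrow \p^m\times B,\qquad (x_0:\dots:x_n)\longmapsto\big((x_0:x_{k+1}:\dots:x_n),\,(x_0:\dots:x_k)\big),
\]
and analogously for $M_2$, using $x_1$ in the fibre direction — conjugates $F_1|_{X''}$ to a genuine product $g\times h$ on $\p^m\times B$, where $m=\dim V+1\geq 1$ and $g$ carries the multiplicatively independent eigenvalues (and the Jordan block in the $M_2$ case), so $g$ has infinite order. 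Finally, replacing $B$ by an equivariant resolution $Z$ of singularities (a finite cyclic group acts and we are in characteristic $0$) lifts $h$ to a finite-order automorphism of the smooth projective variety $Z$ and yields $\alpha\colon X\dashrightarrow \p^m\times Z$ with $\alpha\circ f\circ\alpha^{-1}=g\times h$, which is exactly case $(3)$.

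\emph{Main obstacle.} The delicate point is to make the splitting a genuine direct product with the correct orders. The choice of the shared eigenvalue-one coordinate $x_0$ is precisely what keeps the trivialisation birational and $F_1$-equivariant; in the $M_2$ case one must project from the \emph{invariant} vertex rather than from the non-invariant base subspace, since the Jordan block prevents the naive projection to the base from commuting with $F_1$. Establishing the invariance of the minimal coordinate subspace $L$ (so that Lemma \ref{lemma: invariant subvarieties under automorphisms of pn} becomes applicable) is the other step that needs care.
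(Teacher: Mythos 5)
Your proposal is correct and follows essentially the same route as the paper's own proof: pass to the birational image of $X$ under the linear system of (a power of) $\L$, bring the induced regular automorphism of $\p^N$ into normal form via Theorem \ref{theorem: conjugacy classes of PGL in Bir} together with Lemma \ref{lemma: exc and ind of blanc conjugation}, cut down to the minimal coordinate subspace, and apply Lemma \ref{lemma: invariant subvarieties under automorphisms of pn} to exhibit the image as a cone that splits equivariantly as $\p^m\times Z$ with $g$ carrying the multiplicatively independent eigenvalues (or the Jordan block) and $h$ of finite order. You additionally make explicit three points the paper glosses over --- the $F_1$-invariance of the minimal coordinate subspace, the concrete equivariant trivialisation of the cone including the $M_2$ case (which the paper dismisses as ``analogous''), and the equivariant resolution ensuring $Z$ is smooth --- and your coordinate computations (the identity $F_1(\{x_1=0\})=\{x_1-x_0=0\}$, the invariance of the vertex in the $M_2$ case) are consistent with Corollary \ref{corollary: M2-subrepresentations} and Lemma \ref{lemma: invariant subvarieties under automorphisms of pn} as literally stated, though note that the convention of Section \ref{section: Blanc theorem and corollaries} (points transforming by $M$ rather than $M^t$) would interchange the roles of $x_0$ and $x_1$ in those statements, so one should fix the convention once when writing this up.
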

 \begin{proof}
  The linear system of $\L$ induces the following rational map:
 \[
  \varphi_{|\L|}\colon X\dashrightarrow \p^n.
 \]
 The map $\varphi_{|\L|}$ maps $X$ birationally to its proper image in $\p^n$ and $f$ induces a regular automorphism $F$ of~$\p^n$. We replace $X$ with its proper image in $\p^n$.
 
 By construction no hyperplane in the projective space $\p^n$ contains the variety~$X$. By Theorem \ref{theorem: conjugacy classes of PGL in Bir} there exists a birational map~\mbox{$\alpha\colon \p^n\dashrightarrow \p^n$} such that $G = \alpha\circ F\circ \alpha^{-1}$ is induced by the matrix $M_1$ or $M_2$ defined in \eqref{eq: conjugacy classes of PGL in Bir}. Moreover, by Lemma \ref{lemma: exc and ind of blanc conjugation} the restriction $\alpha|_{X}\colon X\dashrightarrow Y$ is a birational map to the proper image $Y$ of $X$. 
 
 Then $Y$ is a $G$-invariant subvariety in $\p^n$ and $G$ is an infinite order automorphism of $\p^n$. We consider the maximal intersection of hyperplanes $\Pi = \{x_{i_1} = x_{i_2} = \dots = x_{i_k} = 0\}$ which contains $Y$ and replace~$\p^n$ with this intersection and $G$ with $G|_{\Pi}$. Then $Y\subset \p^n$ is a $G$-invariant subvariety that does not lie in a hyperplane~\mbox{$\{x_i = 0\}$} and $G$ is induced by one of the matrices $M_1$ or $M_2$. 
 
 By Lemma \ref{lemma: invariant subvarieties under automorphisms of pn} we deduce that $Y$ is a cone. More precisely, in the case when $G$ is induced by the matrix $M_1$ we observe the variety $Y$ is a cone with a vertex $\{x_{k+1} = \dots = x_n = 0\}$ over a subvariety $Z$ in the hypersubspace $H = \{x_{0} = \dots = x_k = 0\}$ for~\mbox{$0\leqslant k\leqslant n-1$}. Moreover, the restriction of $G$ to the hypersubspace $H$ is an automorphism of finite order. Thus, the automorphism $G|_{Y}$ is conjugate to the product $g\times h\in \Aut(\p^{n-k})\times \Aut(Z)$ where the order of $h$ is finite and $g$ is an automorphism of $\p^{k+1}$ induced by the following matrix:
 \[
  M_g = \mathrm{diag}(1, \lambda_{k+1},\dots, \lambda_n),
 \]
 where numbers $\lambda_{k+1},\dots, \lambda_n$ are as in the matrix $M_1$. In particular, they are multiplicatively independent. The case when $G$ is induced by the matrix $M_2$ is analogous. This completes the proof.
 \end{proof}
 Now we are ready to prove Theorem \ref{theorem: automorphism of bounded growth implies ruled structure or translation on albanese}.
 \begin{proof}[Proof of Theorem \textup{\ref{theorem: automorphism of bounded growth implies ruled structure or translation on albanese}}]
 Let $f\in \Aut(X)$ be an automorphism whose growth is bounded. Once the order of~$f$ is finite, then we get the case \textup{(1)} so further we assume that the order of $f$ is infinite. By Corollary~\ref{corollary: either translation on albanese or invariant big nef bundle} either there exists $N$ such that $f^N$ induces an infinite order translation on the Albanese variety of $X$ i.e. we get the case \textup{(2)} or there exists a big and nef line bundle $\L$ such that $f^*\L = \L$. Then by Lemma \ref{lemma: proof of thm in the case of invariant big nef line bundle} we get the case \textup{(3)}.
 \end{proof}
 Here is the proof of Corollary \textup{\ref{corollary: generalization of Blanc-Deserti}}.
 \begin{proof}[Proof of Corollary \textup{\ref{corollary: generalization of Blanc-Deserti}}]
 Consider a rationally connected threefold $X$ and its infinite order automorphism $f$ whose growth is bounded. Then by Lemma \ref{lemma: proof of thm in the case of invariant big nef line bundle} there exists a birational isomorphism $\alpha\colon X\dashrightarrow \p^m\times Z$ where~\mbox{$m\geqslant 1$} and $\alpha\circ f\circ \alpha^{-1}=g\times h\in \Aut(\p^m)\times \Aut(Z)$ where $g$ is an infinite order automorphism of $\p^m$ and the order of $h$ is finite. Let $N$ be the order of $h$, then $\alpha\circ f^N\circ \alpha^{-1} = g^N\times \mathrm{id}_Z$.
 
 Since $X$ is rationally connected, so is $Z$. Since $\dim(Z)\leqslant 2$ then $Z$ is a rational variety (or a point). Thus, there exists a birational map $\beta \colon Z\dashrightarrow \p^{3-m}$. Denote by $\alpha'$ the composition 
 \[
  \alpha' = \left(\mathrm{id}_{\p^m}\times \beta\right)\circ \alpha\colon X\dashrightarrow \p^k\times \p^{3-m}.
 \]
 This map is birational by construction; moreover, $\alpha'\circ f^N \circ {\alpha'}^{-1} = g\times \mathrm{id}_{\p^{3-m}}\in \Aut(\p^m)\times \Aut(\p^{3-m})$. This automorphism is conjugate to an automorphism of $\p^3$ and the proof is complete. 
 \end{proof}
 \section{Automorphism of a rational threefold which is not conjugate to $\Aut(\p^3)$}\label{section: example} 
 Let $S$ be a del Pezzo surface of degree $2$ and let $\pi\colon S\to \p^2$ be the map induced by the anticanonical linear system. Then $\pi$ is a double cover branched over a smooth quartic curve. Denote by $\tau\in \Aut(S)$ the Geiser involution of $S$. Fix an element $g\in\Aut(\p^1)$ of infinite order and consider the automorphism $\tau\times g$ of the rational threefold $S\times \p^1$. 
 
 The automorphism $\tau \times g$ is not conjugate to a regular automorphism of $\p^3$ by Lemma \ref{lemma: example of threefold aut which is not conjugate to pgl}. The proof follows from the fact that the involution $\tau$ is not conjugate to a regular automorphism of $\p^2$. Thus, we get a construction of an infinite order automorphism of a rational threefold whose growth is bounded and which is not conjugate to a regular automorphism of $\p^3$.
 \begin{lemma}\label{lemma: example of threefold aut which is not conjugate to pgl}
  The automorphism $\tau\times g\in \Aut(S\times \p^1)$ is not conjugate to a regular automorphism of $\p^3$.
 \end{lemma}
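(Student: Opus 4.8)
The goal is to show that $\tau \times g \in \Aut(S \times \p^1)$ is not birationally conjugate to any regular automorphism of $\p^3$. Let me think about what structure I can exploit.

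First, the key fact the paper already hints at: the Geiser involution $\tau$ is not conjugate to a regular automorphism of $\p^2$. So any proof should reduce the three-dimensional statement to this two-dimensional fact.

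Let me think about why $\tau$ is not conjugate to an automorphism of $\p^2$. The Geiser involution on a degree-2 del Pezzo surface $S$ acts on $H^0(S, -K_S) = H^0(\p^2, \OP(1))$... no wait, the anticanonical map is a double cover of $\p^2$. The Geiser involution swaps the two sheets. On the Picard lattice $\Pic(S) \cong \ZZ^{1,7}$, the involution $\tau^*$ acts as $-1$ on the orthogonal complement of $K_S$ (the root lattice $E_7$), fixing $K_S$. A regular involution of $\p^2$ is conjugate in $\PGL_3$ to a diagonal involution, which fixes a line and a point; its action on cohomology/Picard is trivial. More to the point: a finite-order automorphism of $\p^2$ has a fixed point, and near it acts linearly. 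The Geiser involution, by contrast, has a large fixed locus (the ramification curve, a genus-3 curve). The trace of $\tau^*$ on cohomology, or the structure of its fixed locus, gives a birational invariant that distinguishes it.

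**The approach.**

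I would argue by contradiction, combined with a "slicing/projection" argument to descend from dimension 3 to dimension 2. Suppose $\tau \times g$ were conjugate to some $\Phi \in \Aut(\p^3)$ via a birational map $\psi \colon S \times \p^1 \dashrightarrow \p^3$. The automorphism $\tau \times g$ commutes with the projection $\pr_1 \colon S \times \p^1 \to S$, whose fibers are the $g$-invariant family of $\p^1$'s; equivalently, $\tau \times g$ preserves the fibration $\pr_2 \colon S \times \p^1 \to \p^1$ given by the factors $S \times \{pt\}$, and it acts on the base $\p^1$ via the infinite-order $g$, but the general fiber $S$ is mapped to another fiber $S$ by $\tau$ acting identically (since $\tau \times g$ sends $S \times \{t\}$ to $S \times \{g(t)\}$, acting as $\tau$ on $S$).

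The crucial structural point: $\tau \times g$ has an invariant fibration over $\p^1$ (the $\pr_2$-fibration) whose general fiber is $S$, on which the induced map is $\tau$. If the whole thing were conjugate to $\Phi \in \Aut(\p^3)$, then $\Phi$ would have infinite order with bounded growth, so by Theorem~\ref{theorem: conjugacy classes of PGL in Bir} I may assume $\Phi$ is induced by $M_1$ or $M_2$. An infinite-order diagonal or quasi-diagonal automorphism of $\p^3$ of this special form has a very restricted structure: its periodic/fixed subvarieties are the coordinate linear subspaces, and its "quotient dynamics" are understood. The plan is to extract from $\Phi$ a canonically-defined quotient by the generic leaf of its invariant fibration and show that the induced automorphism on that $2$-dimensional quotient must be conjugate to an automorphism of $\p^2$ — contradicting that this induced map is birationally the Geiser involution $\tau$.

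**The main obstacle and how I would handle it.**

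The hard part is making the descent rigorous: I need a birational invariant of the pair (3-fold with an automorphism) that (i) is preserved under birational conjugation, (ii) for $\Phi \in \Aut(\p^3)$ forces the "fiber automorphism" to be linearizable on $\p^2$, and (iii) for $\tau \times g$ recovers exactly $\tau$. The cleanest route, I expect, is to pass to the quotient by the infinite cyclic group or to use the $g$-equivariant structure: because $g$ has infinite order on the base $\p^1$, any $\tau \times g$-invariant structure must be "constant in the $\p^1$-direction," so the geometry is controlled entirely by the $S$-factor with its $\tau$-action. Concretely, I would look at the rational quotient map to the base of the invariant fibration and show that conjugacy of $\tau \times g$ to $\Phi \in \Aut(\p^3)$ would induce a birational conjugacy between $\tau$ (on a surface birational to $S$, i.e.\ on $\p^2$) and a regular automorphism of $\p^2$ arising as a quotient of the linear $\Phi$. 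The final contradiction is then that the Geiser involution is \emph{not} conjugate in the plane Cremona group to a linear involution — a known fact, since a linear (regular) involution of $\p^2$ fixes a line pointwise whereas the Geiser involution's fixed curve has genus $3$, and the genus of the (normalized) fixed-point curve of an involution is a birational conjugacy invariant. I anticipate the technical friction will be entirely in controlling how the base-$\p^1$ dynamics of $g$ interacts with the coordinate structure of $M_1$/$M_2$, to guarantee that the $2$-dimensional "fiber" really does descend to a genuine regular automorphism of $\p^2$ rather than some wilder birational map; the infinite order of $g$ and the rigidity from Theorem~\ref{theorem: conjugacy classes of PGL in Bir} should be exactly what forces this.
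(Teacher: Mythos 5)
Your proposal has a genuine gap, and it is exactly at the step you yourself flag as ``anticipated technical friction'': you never establish that the birational self-map induced by $\Phi$ on the two-dimensional quotient is conjugate to a \emph{regular} automorphism of $\p^2$. Transporting the fibration $\pr_1\colon S\times\p^1\to S$ through the conjugating map $\psi$ does give a $\Phi$-equivariant family of rational curves on $\p^3$ with rational quotient $q\colon \p^3\dashrightarrow \p^2$ satisfying $q\circ\Phi = \tau\circ q$; but this only exhibits $\tau$ as a birational automorphism of $\p^2$ obtained as a quotient of $\Phi$ --- which is no contradiction at all, since the Geiser involution \emph{is} an element of $\Bir(\p^2)$. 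To reach a contradiction with Bayle--Beauville (your correctly cited fact that a Geiser involution is not linearizable) you would need the quotient dynamics to be linear, and nothing in Theorem \ref{theorem: conjugacy classes of PGL in Bir} delivers this: Blanc's normal form controls $\Phi$ itself, not the transported fibration, which after the birational conjugation is an arbitrary (possibly wild) $\Phi$-equivariant web of rational curves, and quotients of linear maps by such non-linear fibrations have no a priori reason to be regular on $\p^2$. Moreover $q\circ\Phi=\tau\circ q$ forces $\Phi^2$ to preserve every fiber of $q$, so one would have to classify all $\Phi^2$-invariant families of rational curves for the $M_1/M_2$ normal forms --- substantial work that the proposal leaves entirely undone. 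As written, the argument is a plan whose decisive step is missing and not obviously true.

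The paper's actual proof sidesteps any descent to dimension two. It picks a $g$-fixed point $x_0\in\p^1$ and uses the curve $C = R\times\{x_0\}$ (with $R$ the ramification curve), which is a genus-$3$ curve \emph{pointwise fixed} by $\tau\times g$. Since the fixed locus of a linear automorphism of $\p^3$ is a union of linear subspaces, the proper image $C'$ (non-rational, hence neither contracted nor a line) forces $f$ to fix a plane pointwise, i.e.\ $f = \diag(1,1,1,\lambda)$ in suitable coordinates; then the $f$-invariant divisor coming from $S\times\{x_0\}$, on which $f$ restricts to an involution and which cannot lie in the fixed hyperplane, forces $\lambda$ to be a root of unity, contradicting the infinite order of $f$. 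So the two-dimensional non-linearizability of $\tau$ enters the paper only as motivation; the proof itself runs through the linearity of fixed loci in $\p^3$, which is precisely the kind of concrete, checkable mechanism your quotient strategy lacks. If you want to salvage your approach, the pointwise-fixed genus-$3$ curve is the object to extract from your setup: it converts the vague ``the geometry is constant in the $\p^1$-direction'' into a rigid constraint on $\Phi$ directly.
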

 \begin{proof}
  Denote by $Y$ the product $Y = R\times \p^1$ in $ S\times \p^1$, here $R\subset S$ is the ramification divisor of the double cover $\pi\colon S\to \p^2$. Thus, $R$ is a smooth curve of genus $3$ and the divisor $Y$ is $(\tau\times g)$-invariant. 
  
  Fix a $g$-invariant point $x_0\in\p^1$ and consider the curve $C = R\times \{x_0\}\subset S\times \p^1$. The curve $C\subset Y$ is smooth of genus $3$ and it is $(\tau\times g)$-invariant. Moreover, one has $(\tau\times g)|_{C} = \mathrm{id}_C$.
  
  Assume that there exists a birational map $\alpha\colon S\times \p^1\dashrightarrow \p^3$ such that $f = \alpha\circ (\tau\times g)\circ \alpha^{-1} \in \Aut(\p^3)$. The curve $C$ does not lie in the indeterminacy locus of $\alpha$ since it is not rational. Consider the proper images~$Y'$ and $C'$ of the divisor $Y$ and the curve $C$ under the map $\alpha$. Then $Y'$ is either an  $f$-invariant surface birational to $Y$ or it is an  $f$-invariant curve of geometric genus $3$ and $C'$ is an $f$-invariant curve in~$\p^3$. By construction we have $f|_{C'} = \mathrm{id}_C'$.
  
  If $Y'$ is a surface then the restriction of $\alpha$ to $C$ is a birational map. Thus, $C'$ is an $f$-invariant curve in~$\p^3$ of geometric genus 3. Since $f|_{C'} = \mathrm{id}_C'$ we deduce that $C'$ lies in a hyperplane of $\p^3$. Moreover, there exists homogeneous coordinates $(x_0:x_1:x_2:x_3)$ of $\p^3$ such that $C'\subset\{x_3 = 0\}$ and
  \[
   f(x_0:x_1:x_2:x_3) = (x_0:x_1:x_2:\lambda x_3).
  \]
  Consider the divisor $S_0 = S\times \{x_0\} \subset S\times \p^1$. Then $S_0$ is $(\tau\times g)$-invariant divisor and
  \[
   (\tau\times g)|_{S_0}\colon S_0\to S_0 
  \]
  is an involution. Consider the proper image $S'_0$ of the divisor $S_0$. Since $C$ lies in $S_0$ and $\alpha|_C$ is a birational map then $S'_0$ is a divisor in $\p^3$. By construction it is $f$-invariant and $f|_{S_0'}$ is an involution. Then $S_0'$ does not lie in $\{x_3 = 0\}$; thus, $\lambda = 1$ i.e. the order of automorphism $f$ is finite. This contradicts the assumption.
  
  If $Y'$ is a curve then $\alpha|_{Y}$ is the projection to the first component of the product $C\times \p^1$. Thus, the restriction of $\alpha$ to $C$ is birational and by the above argument we also get a contradiction with the fact that the order of $f$ is infinite. This finishes the proof.
 \end{proof}

   \bibliographystyle{alpha}
   \bibliography{references.bib}

\begin{thebibliography}{DLOZ22}

\bibitem[BD15]{Blanc-Deserti}
J.~Blanc and J.~D\'{e}serti.
\newblock Degree growth of birational maps of the plane.
\newblock {\em Ann. Sc. Norm. Super. Pisa Cl. Sci. (5)}, 14(2):507--533, 2015.

\bibitem[Bla06]{Blanc_conjugacy_of_PGL}
J.~Blanc.
\newblock Conjugacy classes of affine automorphisms of {$\mathbb{K}^n$} and
  linear automorphisms of {$\mathbb{P}^n$} in the {C}remona groups.
\newblock {\em Manuscripta Math.}, 119(2):225--241, 2006.

\bibitem[DF01]{Diller_Favre}
J.~Diller and C.~Favre.
\newblock Dynamics of bimeromorphic maps of surfaces.
\newblock {\em Amer. J. Math.}, 123(6):1135--1169, 2001.

\bibitem[DLOZ22]{DLOZ}
T.-C. Dinh, H.-Y. Lin, K.~Oguiso, and D.-Q. Zhang.
\newblock Zero entropy automorphisms of compact {K}\"{a}hler manifolds and
  dynamical filtrations.
\newblock {\em Geom. Funct. Anal.}, 32(3):568--594, 2022.

\bibitem[DS05]{DS_Lambdas}
T.-C. Dinh and N.~Sibony.
\newblock Une borne sup\'{e}rieure pour l'entropie topologique d'une
  application rationnelle.
\newblock {\em Ann. of Math. (2)}, 161(3):1637--1644, 2005.

\bibitem[HJ24]{HJ}
F.~Hu and C.~Jiang.
\newblock An upper bound for polynomial volume growth of automorphisms of zero
  entropy.
\newblock {\em \rm arXiv:2408.15804}, 2024.

\bibitem[Kra18]{Kraft}
H.~Kraft.
\newblock Regularization of rational group actions.
\newblock {\em \rm arXiv:1808.08729}, 2018.

\bibitem[Tru20]{TTT_dynamical_degrees}
T.~T. Truong.
\newblock Relative dynamical degrees of correspondences over a field of
  arbitrary characteristic.
\newblock {\em J. Reine Angew. Math.}, 758:139--182, 2020.

\bibitem[Voi07]{Voisin_book}
C.~Voisin.
\newblock {\em Hodge theory and complex algebraic geometry. {I}}, volume~76 of
  {\em Cambridge Studies in Advanced Mathematics}.
\newblock Cambridge University Press, Cambridge, english edition, 2007.
\newblock Translated from the French by Leila Schneps.

\bibitem[Wei55]{Weil}
A.~Weil.
\newblock On algebraic groups of transformations.
\newblock {\em Amer. J. Math.}, 77:355--391, 1955.

\bibitem[Yan24]{Yang}
S.~Yang.
\newblock A regularization theorem for bounded-degree self-maps.
\newblock {\em \rm arXiv:2403.07394}, 2024.

\end{thebibliography}

\end{document}